\documentclass[11pt,a4paper]{article}
\usepackage{amsmath}
\usepackage{amsfonts}
\usepackage{amssymb}
\usepackage{centernot}
\usepackage[margin=3cm]{geometry}
\usepackage[colorlinks=true,linkcolor=blue,citecolor=blue]{hyperref}
\usepackage{enumitem}
\usepackage{graphicx,fancybox}
\usepackage{tikz}
\usepackage{subfigure}
\usetikzlibrary{positioning}
\usepackage{tkz-berge}
\usepackage{hhline}

\usepackage{amsthm}
\newtheorem{fact}{Fact}[section]
\newtheorem{example}{Example}[section]


\newcommand{\tr}{\operatorname{tr}}
\newcommand{\round}{\operatorname{round}}
\newcommand{\Wd}{\operatorname{Wd}}
\newcommand{\Hi}{\mathcal{H}}
\newcommand{\R}{\mathbb{R}}

\newcounter{row}
\newcounter{col}
\newcommand\setrow[9]{
	\setcounter{col}{1}
	\foreach \n in {#1, #2, #3, #4, #5, #6, #7, #8, #9} {
		\edef\x{\value{col} - 0.5}
		\edef\y{9.5 - \value{row}}
		\node[anchor=center] at (\x, \y) {\n};
		\stepcounter{col}
	}
	\stepcounter{row}
}

\DeclareFontFamily{U}{mathx}{\hyphenchar\font45}
\DeclareFontShape{U}{mathx}{m}{n}{
      <5> <6> <7> <8> <9> <10>
      <10.95> <12> <14.4> <17.28> <20.74> <24.88>
      mathx10
      }{}
\DeclareSymbolFont{mathx}{U}{mathx}{m}{n}
\DeclareFontSubstitution{U}{mathx}{m}{n}
\DeclareMathAccent{\widecheck}{0}{mathx}{"71}


\title{Solving graph coloring  problems\\ with the Douglas--Rachford algorithm}

\author{Francisco J. Arag\'on Artacho\thanks{Department of Mathematics,
University of Alicante, \textsc{Spain}. e-mail:~\url{francisco.aragon@ua.es}}
        \and Rub\'en Campoy\thanks{Department of Mathematics,
University of Alicante, \textsc{Spain}. e-mail:~\url{ruben.campoy@ua.es}}
}
\date{\smallskip \bf \small Dedicated to the memory of Jonathan Michael Borwein}

\begin{document}

\maketitle

\begin{abstract}
We present the Douglas--Rachford algorithm as a successful heuristic for solving graph coloring problems. Given a set of colors, these type of problems consist in assigning a color to each node of a graph, in such a way that every pair of adjacent nodes are assigned with different colors. We formulate the graph coloring problem as an appropriate feasibility problem that can be effectively solved by the Douglas--Rachford algorithm, despite the nonconvexity arising from the combinatorial nature of the problem. Different modifications of the graph coloring problem and applications are also presented. The good performance of the method is shown in various computational experiments.
\end{abstract}

\paragraph*{Keywords:} Douglas--Rachford algorithm, graph coloring, feasibility problem, non-convex
\paragraph*{MSC2010:} 	47J25, 90C27, 47N10

\section{Introduction}

A \emph{graph} $G=(V,E)$ is a collection of points $V$ that are connected by links $E\subset V\times V$. The points are usually known as  \emph{nodes} or \emph{vertices} while the links are called \emph{edges}, \emph{arcs} or \emph{lines}. An \emph{undirected graph} is a graph in which the edges have no orientation; that is, the edges are not ordered pairs of vertices but sets of two vertices.

A \emph{proper $m$-coloring} of an undirected graph $G$ is an assignment of one of $m$ possible colors to each vertex of $G$ such that no two adjacent vertices share the same color. More specifically, given the set of colors $K=\{1,\ldots,m\}$, an \emph{$m$-coloring} of G is a mapping $c:V\mapsto K$, assigning a color to each vertex. We say that $c$ is \emph{proper} if
$$
c(i)\neq c(j) \text{ for all } \{i,j\}\in E.
$$
The \emph{graph coloring problem} consists in determining whether is possible to find a proper $m$-coloring of the graph~$G$.
For a basic reference on graph coloring, see e.g.~\cite{JT95}.

Graph coloring has been used in many practical applications such as timetabling and scheduling~\cite{L79}, computer register allocation~\cite{CACCHM81}, radio frequency assignment~\cite{H80}, and printed circuit board testing~\cite{GJS76}. The graph coloring problem was proved to be NP-complete~\cite{K72}, so it is reasonable to believe that no polynomial-time exact algorithm solving these problems can be found. For this reason, a wide variety of heuristics and approximation algorithms have been developed for solving graph coloring problems. See~\cite{PMX98} for a bibliographic survey of algorithms and applications, or the more recent survey~\cite{FT12}.

In this paper we show that the Douglas--Rachford algorithm can be successfully used as  heuristic for solving a wide variety of graph coloring problems when they are conveniently modeled as feasibility problems. Despite the convergence of the Douglas--Rachford algorithm is only guaranteed for convex sets, the method has been successfully employed for solving many different nonconvex optimization problems, specially those of combinatorial nature (see, e.g.,~\cite{ABTmatrix, ABTcomb, BKroad, Elser}). The Douglas--Rachford method belongs to the family of so-called projection algorithms, which are traditionally analyzed using nonexpansivity properties when the problem is convex. There are very few results explaining why the algorithm works in nonconvex settings, and even less justifying its good global performance. For example, the global convergence of the algorithm for the case of a sphere and a line was proved in~\cite{benoist} (see also \cite{ABglobal, BS11}), and global convergence for the case of a halfspace and a potentially nonconvex set was recently proved in~\cite{ABT16}. For local convergence results involving nonconvex sets, see e.g.~\cite{BNlocal, HLnonconvex, Plinear}.

The good performance of the Douglas--Rachford algorithm for solving the problem consisting in coloring the edges of a complete graph with three colors while avoiding monochromatic triangles was shown by Elser et al. in~\cite{Elser}.  Elser seems to have been the first to see the remarkable potential of the algorithm for solving nonconvex problems.

The paper is structured as follows. Section~\ref{sec:Preliminaries} contains some preliminary results and notions. We show how to model the graph coloring problem as a feasibility problem in Section~\ref{sec:Modeling_graphcoloring}. When available, maximal clique information can be easily added to the model, as explained in Section~\ref{subsec:clique}. We present two ways of reformulating a 3-SAT problem as a graph coloring problem in Section~\ref{subsec:3sat}. The precoloring and list coloring problems, which are variations of the graph coloring problem, are discussed in Section~\ref{sec:precoloring}. We also treat in this section a well-known example of the precoloring problem: Sudoku puzzles. In Section~\ref{sec:8Q}, we show that the 8-queens puzzle, as well as some generalizations, can be also modeled as modified graph coloring problems. In Section~\ref{sec:hamiltonian}, we discuss the Hamiltonian path problem. We report the results of a collection of numerical experiments in Section~\ref{sec:numerical}, where we exhibit the good performance of the Douglas--Rachford method for finding a solution of all the graph coloring problems considered along the paper. We finish with various concluding remarks in Section~\ref{sec:conclusion}.

\section{Preliminaries}\label{sec:Preliminaries}

Let $\Hi$ be a Hilbert space with inner product $\langle\cdot ,\cdot\rangle$ and induced norm~\mbox{$\|\cdot\|$}. Given a nonempty subset $C\subseteq \Hi$ and $x\in \Hi$, a point $p\in C$ is said to be a \emph{best approximation} to $x$ from $C$ if
\begin{equation*}
\|p-x\|=d(x,C):=\inf_{c\in C}\|c-x\|.
\end{equation*}

If a best approximation in $C$ exists for every point in $\Hi$, then $C$ is called \emph{proximal}. The \emph{projector operator} onto C is the set-valued mapping $P_C:\Hi\rightrightarrows C$ given by
$$P_C(x):=\left\{p\in C :  \|p-x\|=\inf_{c\in C}\|c-x\|\right\},$$
and the \emph{reflector} is defined as $R_C:=2P_C-I$, where $I$ denotes the identity operator.
If every point $x\in \Hi$ has exactly one best approximation $p$, then $C$ is called \emph{Chebyshev} and $p$ is referred as the \emph{projection} of $x$ onto $C$. In this case, both $P_C$ and $R_C$ are single-valued. Recall that a weakly closed subset of a Hilbert space is convex if and only if it is a Chebyshev set (see, e.g.~\cite[Theorem~3.2]{ABMY14}).

Given $C_1,C_2,\ldots,C_r\subseteq \Hi$, the \emph{feasibility problem} consists in finding a point belonging to all these sets, that is,
\begin{equation*}\label{eq:FeasibilityProblem}
\text{Find } x\in\bigcap_{i=1}^r C_i.
\end{equation*}
In many practical situations, the projection onto each of these sets can be easily computed, while finding a point in the intersection of the sets might be intricate. In such cases, and when the sets are convex, the Douglas--Rachford method (DR in short) is a useful tool to solve the problem.

\begin{fact}
Let $A,B\subseteq\Hi$ be closed and convex sets. Consider the Douglas--Rachford operator defined as
$$T_{A,B}=\frac{I+R_BR_A}{2}.$$
Given any $x_0\in \Hi$, for every $n\geq0$, define $x_{n+1}=T_{A,B}(x_n)$. Then, the following holds.
\begin{itemize}[noitemsep,topsep=5pt]
\item [(i)] If $A\cap B\neq\emptyset$, then $\{x_n\}$ converges weakly to a point $x^\star$ such that $P_A(x^\star)\in A\cap B$.
\item [(ii)] If $A\cap B=\emptyset$, then $\|x_n\|\rightarrow +\infty$.
\end{itemize}
\end{fact}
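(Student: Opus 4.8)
The plan is to exploit the firm nonexpansivity of metric projectors onto closed convex sets. First I would recall the standard fact that for a nonempty closed convex set $C$ the projector $P_C$ is firmly nonexpansive, equivalently that the reflector $R_C=2P_C-I$ is nonexpansive; this is exactly the place where convexity (Chebyshev-ness) enters. Applying this to $A$ and $B$, the composition $R_BR_A$ is nonexpansive as a composition of nonexpansive maps, and hence $T_{A,B}=\tfrac12(I+R_BR_A)$ is firmly nonexpansive, i.e.\ a $\tfrac12$-averaged operator. This single structural property is what makes the whole argument run.

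Next I would pin down the fixed point set. A direct computation shows $x\in\Fix T_{A,B}$ if and only if $R_BR_A x=x$; writing $a:=P_Ax$, so that $R_Ax=2a-x$, the equation $R_B(2a-x)=x$ rearranges to $P_B(2a-x)=a$, which forces $a\in A\cap B$. Conversely, any $z\in A\cap B$ satisfies $R_Az=R_Bz=z$ and therefore lies in $\Fix T_{A,B}$. This yields the two inclusions $A\cap B\subseteq \Fix T_{A,B}$ and $P_A(\Fix T_{A,B})\subseteq A\cap B$, and in particular $\Fix T_{A,B}\neq\emptyset$ precisely when $A\cap B\neq\emptyset$. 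For (i), since $A\cap B\neq\emptyset$ gives a nonempty fixed point set, I would invoke the classical convergence theorem for averaged (firmly nonexpansive) operators possessing fixed points---the Krasnoselskii--Mann iteration together with the demiclosedness principle and Opial's lemma---to conclude that $\{x_n\}$ converges weakly to some $x^\star\in\Fix T_{A,B}$. The inclusion $P_A(\Fix T_{A,B})\subseteq A\cap B$ from the previous step then delivers $P_A(x^\star)\in A\cap B$, which is exactly the assertion.

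For (ii), the fixed point characterization gives $\Fix T_{A,B}=\emptyset$, and the real task is to upgrade this to $\|x_n\|\to+\infty$. Here I would study the displacements $d_n:=x_n-x_{n+1}=(I-T_{A,B})x_n$. Since $I-T_{A,B}$ is itself nonexpansive, $\|d_n\|$ is nonincreasing; and by Pazy's theorem on the asymptotic behaviour of nonexpansive iterations, $\tfrac1n(x_0-x_n)$ converges strongly to the minimal-norm element $v$ of $\overline{\ran(I-T_{A,B})}=\overline{A-B}$, the gap vector. When the gap is positive, i.e.\ $v\neq 0$, this already forces $\|x_n\|\to+\infty$. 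The hard part will be the remaining case of a vanishing gap with empty intersection ($v=0$): there the Ces\`aro estimate is inconclusive, and I would instead argue that a bounded subsequence of $\{x_n\}$ is impossible---otherwise demiclosedness of $I-T_{A,B}$ at $0$ would manufacture a fixed point, contradicting $\Fix T_{A,B}=\emptyset$---and then use the monotonicity of $\|d_n\|$ to prevent the norms from returning to a bounded regime, forcing $\|x_n\|\to+\infty$. This zero-gap subcase is exactly where the elementary Fej\'er-monotone reasoning available in case (i) breaks down, and is the reason the full statement genuinely rests on the refined analysis of Bauschke--Combettes--Luke rather than on Lions--Mercier alone.
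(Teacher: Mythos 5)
Your part (i) is sound and is essentially the argument behind the result the paper invokes: the paper itself offers no proof of this Fact, only the citation to Bauschke--Combettes--Luke, and your route (firm nonexpansivity of $T_{A,B}$, the characterization $A\cap B\subseteq\Fix T_{A,B}$ together with $P_A(\Fix T_{A,B})\subseteq A\cap B$, then Krasnoselskii--Mann with demiclosedness/Opial) is the standard one found in that reference. One assertion you make along the way is false, though harmlessly so: the identity $\overline{\ran(I-T_{A,B})}=\overline{A-B}$ does not hold in general. From $x-T_{A,B}x=P_Ax-P_B(R_Ax)$ one only gets $\ran(I-T_{A,B})\subseteq A-B$, and the inclusion can be strict even after taking closures: for $A=\R\times\{0\}$ and $B=\R\times\{1\}$ in $\R^2$ one has $\ran(I-T_{A,B})=\{(0,-1)\}$ while $\overline{A-B}=\R\times\{-1\}$. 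What is true, and all your argument needs, is that the minimal-norm elements of the two sets coincide, so the dichotomy $v\neq0$ versus $v=0$ survives.

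The genuine gap is in the zero-gap subcase of (ii). To invoke demiclosedness of $I-T_{A,B}$ at $0$ along a bounded subsequence $x_{n_k}\rightharpoonup x$, you need $(I-T_{A,B})x_{n_k}=d_{n_k}\to0$ strongly, and nothing you have stated provides this. Monotonicity of $\|d_n\|$ gives only $\|d_n\|\downarrow\ell$ for some $\ell\geq0$, and Pazy's theorem gives only $\ell\geq\|v\|$; the problematic case $\ell>0=\|v\|$ is not excluded by these two facts. This case is a real possibility for merely nonexpansive maps---a rotation of the plane has $v=0$ yet $\|d_n\|$ constant and positive along every nonzero orbit---so firm nonexpansivity must be used at precisely this point. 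The missing ingredient is the Baillon--Bruck--Reich theorem (equivalently, the Bruck--Reich theory of strongly nonexpansive mappings on which the cited reference relies): for firmly nonexpansive $T$ the displacements converge strongly, $d_n\to v$, hence $\ell=\|v\|=0$ and the demiclosedness argument can be launched. With that theorem cited your proof closes; without it, it does not. A final small remark: your concluding step about using monotonicity of $\|d_n\|$ ``to prevent the norms from returning to a bounded regime'' is superfluous, since the negation of $\|x_n\|\to+\infty$ is exactly the existence of a bounded subsequence, which is what the demiclosedness argument already rules out.
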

\begin{proof}
See~\cite[Theorem~3.13 and Corollary~3.9]{BCL04}.
\end{proof}

Finitely many sets in a feasibility problem are usually handled by reducing the problem to the two-sets case trough the Pierra's \emph{product space formulation}. To this aim, consider the Hilbert product space $\Hi^r$ and define the sets
\begin{equation*}
C:=\prod_{i=1}^{r}C_i\quad\text{and} \quad D:=\left\{(x,x,\ldots,x)\in \Hi^r:x\in \Hi\right\}.
\end{equation*}
While the set $D$, sometimes called the \emph{diagonal}, is always a closed subspace, the properties of $C$ are largely inherited. For instance, $C$ is nonempty if $C_1,\ldots,C_r$ are not disjoint; and if $C_1,\ldots,C_r$ are closed and convex, so is $C$. Thus, the feasibility problem can be reformulated as a two-sets problem, since
\begin{equation*}
	x\in\bigcap_{i=1}^r C_i \Leftrightarrow (x,x,\ldots,x)\in C\cap D.
\end{equation*}
Moreover, knowing the projections onto $C_1,\ldots,C_r$, the projections onto $C$ and $D$ can be easily computed.
Indeed, for any $\mathbf{x}=(x_1,\ldots,x_r)\in \Hi^r$, we have
\begin{equation*}
P_C(\mathbf{x})=\prod_{i=1}^{r} P_{C_i}(x_i)\quad\text{and} \quad P_D(\mathbf{x})=\left( \frac{1}{r}\sum_{i=1}^r x_i \right)^r,
\end{equation*}
see~\cite[Lemma~1.1]{Pierra}. For further details see, for example,~\cite[Section~3]{ABTcomb}.

Throughout this paper the space $\Hi$ will be the Euclidean space $\R^{n\times m}$ of $n\times m$ real matrices. Its inner product is given by
$$\langle A,B\rangle := \tr\left(A^TB\right),$$
where $A^T$ is the transpose matrix of $A$, and $\tr(M)$ is the trace of  a square matrix $M$. The induced norm corresponds to the \emph{Frobenius} norm
$$\|A\|_F=\tr(A^TA)=\sqrt{\sum_{i=1}^n\sum_{j=1}^m a_{ij}^2}.$$

Let us introduce two results that characterize some projections on $\R^{n\times m}$, which will be useful later for computing the projection onto different sets.

\begin{fact}\label{fact:proj_unitvector}
Let $e_1,\ldots,e_n$ denote the unit vectors of the standard basis of $\R^n$, and consider $C=\{e_1,\ldots,e_n\}$. Then, for any $x=(x_1,\ldots,x_n)\in\R^n$,
$$P_C(x)=\left\{e_i: x_i=\max\left\{x_1,\ldots,x_n\right\}\right\}.$$
\end{fact}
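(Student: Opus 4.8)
The plan is to reduce the minimization defining $P_C(x)$ to a simple comparison of the coordinates $x_1,\ldots,x_n$, by expanding the squared distance from $x$ to each candidate basis vector. First I would observe that $C=\{e_1,\ldots,e_n\}$ is a finite and hence closed subset of $\R^n$, so a best approximation exists for every $x$ and $P_C(x)$ is nonempty; this is what makes the set-valued description meaningful when several coordinates tie for the maximum.

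The core computation would be to expand, for each index $i\in\{1,\ldots,n\}$,
\begin{equation*}
\|x-e_i\|^2=\|x\|^2-2\langle x,e_i\rangle+\|e_i\|^2=\|x\|^2-2x_i+1,
\end{equation*}
using that $\langle x,e_i\rangle=x_i$ and $\|e_i\|=1$. The key observation is then that the terms $\|x\|^2$ and $1$ do not depend on $i$, so that the quantity $\|x-e_i\|$ is minimized over $i$ precisely when $x_i$ is maximized over $i$.

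From this I would conclude directly that $e_i$ is a best approximation to $x$ from $C$ exactly when $x_i=\max\{x_1,\ldots,x_n\}$, which is the claimed formula for $P_C(x)$. I do not expect any genuine obstacle: the statement is essentially a one-line consequence of the Pythagorean expansion of the Euclidean norm, and the only point that deserves a little care is the bookkeeping of ties — when several coordinates attain the maximal value, the corresponding vectors $e_i$ are all equidistant from $x$ and therefore all lie in $P_C(x)$, which is exactly why the projector is written as a set rather than a single point.
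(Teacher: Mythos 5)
Your proof is correct and complete: expanding $\|x-e_i\|^2=\|x\|^2-2x_i+1$ and noting that only the term $-2x_i$ depends on $i$ immediately yields the claimed formula, ties included. The paper itself gives no argument for this fact, only a citation to \cite[Remark~5.1]{ABTcomb}, and your computation is precisely the standard reasoning behind that reference, so there is nothing to reconcile.
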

\begin{proof}
See, e.g., \cite[Remark~5.1]{ABTcomb}.
\end{proof}

\begin{fact}\label{fact:proj_nullspace}
Let $A\in\R^{l\times n}$ be a full row rank matrix. Consider $C=\left\{Z\in\R^{n\times m}: AZ=0\right\}$. Then, for any $X\in\R^{n\times m}$, one has
$$P_{C}(X)=\left({\rm Id}_n-A^T\left(AA^T\right)^{-1}A\right)X,$$
where ${\rm Id}_n\in\R^{n\times n}$ denotes the identity matrix.
\end{fact}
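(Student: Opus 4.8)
The plan is to exploit that $C$ is a closed linear subspace of $\R^{n\times m}$, so that the projection onto it exists, is single-valued, and is characterized by the standard orthogonality condition. Indeed, $C$ is precisely the kernel of the continuous linear mapping $Z\mapsto AZ$, hence a closed subspace; being a closed convex subset of a Hilbert space, it is Chebyshev (as recalled before Fact~\ref{fact:proj_unitvector}), so $P_C$ is single-valued. I will then use the fact that, for a closed subspace, a point $p\in C$ equals $P_C(X)$ if and only if the residual $X-p$ is orthogonal to $C$, i.e. $\langle X-p,Z\rangle=0$ for every $Z\in C$.

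First I would note that the candidate formula makes sense: since $A$ has full row rank $l\leq n$, the matrix $AA^T\in\R^{l\times l}$ is symmetric and positive definite, hence invertible, so $p:=\left({\rm Id}_n-A^T(AA^T)^{-1}A\right)X$ is well defined. Then I would verify the two defining properties of the projection. Feasibility, $p\in C$, follows from a one-line computation,
$$Ap=\left(A-AA^T(AA^T)^{-1}A\right)X=(A-A)X=0.$$
For the orthogonality, observe that $X-p=A^T(AA^T)^{-1}AX$, so for any $Z\in C$, using the symmetry of $(AA^T)^{-1}$ and the Frobenius inner product,
$$\langle X-p,Z\rangle=\tr\!\left(X^TA^T(AA^T)^{-1}AZ\right)=0,$$
where the last equality holds because $AZ=0$. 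Since both properties are satisfied, $p=P_C(X)$, which is exactly the asserted formula. (Equivalently, one could remark that $P:={\rm Id}_n-A^T(AA^T)^{-1}A$ is symmetric and idempotent with range $C$, the two hallmarks of an orthogonal projector.)

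I do not expect a genuine obstacle here. The only points that require care are the invertibility of $AA^T$, which is where the full-row-rank hypothesis enters, and the correct bookkeeping of transposes in the Frobenius inner product $\langle A,B\rangle=\tr(A^TB)$; it is precisely this trace structure, together with the symmetry of $AA^T$, that makes the orthogonality computation collapse to the condition $AZ=0$ and thus to $0$.
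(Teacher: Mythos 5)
Your proof is correct, but note that the paper does not actually supply an argument of its own for this fact: its ``proof'' is a one-line citation of \cite[Proposition~3.28(iii)]{BC11}, a textbook result on projections onto solution sets of linear equations, which applied to the operator $T:\R^{n\times m}\to\R^{l\times m}$, $TZ=AZ$ (whose adjoint with respect to the Frobenius inner product is $W\mapsto A^TW$, so that $TT^*$ acts as left multiplication by $AA^T$) yields the stated formula. What you have written is, in effect, a self-contained proof of that cited result in the special case at hand: you observe that $C$ is a closed subspace (hence Chebyshev), check that the candidate $p=\left({\rm Id}_n-A^T\left(AA^T\right)^{-1}A\right)X$ lies in $C$, and verify $X-p\perp C$ by the trace computation, where the symmetry of $\left(AA^T\right)^{-1}$ and the constraint $AZ=0$ make the inner product collapse to zero; the orthogonality characterization of projections onto closed subspaces is exactly the right tool, and the full-row-rank hypothesis enters precisely where it must, to invert $AA^T$. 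The citation buys the paper brevity and generality (the reference covers bounded linear operators on Hilbert spaces); your verification buys self-containedness and makes explicit the mild point that the usual vector statement extends verbatim to matrix variables, since the Frobenius inner product and the constraint $AZ=0$ decouple over the columns of $Z$. Both routes are valid; yours is the more informative for a reader without \cite{BC11} at hand.
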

\begin{proof}
See, e.g., \cite[Proposition~3.28(iii)]{BC11}.
\end{proof}

To finish this section, let us shortly summarize some basic concepts of graph theory. A \emph{complete graph} is an undirected graph in which every pair of nodes is connected by an edge. A \emph{clique} is a subset of vertices of an undirected graph such that its induced graph is complete. A \emph{maximal clique} is a clique that cannot be extended by adding one more vertex. A \emph{path} is a sequence of edges that connects a sequence of distinct vertices. A path is said to be a \emph{cycle} if there is an edge from the last vertex in the path to the first one.

\section{Modeling graph coloring problems as feasibility problems} \label{sec:Modeling_graphcoloring}

The $m$-coloring of a graph $G=(V,E)$ with $n$ nodes can be easily modeled as a feasibility problem. To this aim, let $X=(x_{ik})\in\{0,1\}^{n\times m}$, where $x_{ik}=1$ indicates that vertex $i$ receives color~$k$. Then, we have the following constraints:
\begin{gather}
\sum_{k=1}^m x_{ik}=1,\quad \text{for all } i=1,\ldots,n;\label{eq:C1}\\
x_{ik}+x_{jk}\leq 1, \quad \text{for all } \{i,j\}\in E, k=1,\ldots,m;\label{eq:C2old}\\
x_{ik}\in\{0,1\},\quad \text{for all } i=1,\ldots,n, k=1,\ldots,m.\label{eq:C3}
\end{gather}
Constraint~\eqref{eq:C1} together with~\eqref{eq:C3} determine that each node has exactly one color. Constraint~\eqref{eq:C2old} combined with~\eqref{eq:C3} impose the requirement that any two adjacent nodes cannot be assigned with the same color.

The formulation of the constraints has a big effect in the behavior of the Douglas--Rachford scheme when applied to nonconvex constraints. On one hand, ones needs a formulation where the projection onto the sets is easy to compute. On the other hand,  the formulation chosen often determines
whether or not the Douglas--Rachford scheme can successfully solve the problem at
hand always, frequently or never~\cite{ABTcomb}. For these two reasons, we have realized that it is convenient to reformulate constraint~\eqref{eq:C2old} as follows
\begin{equation}
x_{ik}+x_{jk}-y_{ek}=0, \quad \text{for all } e=\{i,j\}\in E, k=1,\ldots,m,\label{eq:C2_0}
\end{equation}
where $y_{ek}\in\{0,1\}$ for all $i,j\in\{1,\ldots,n\}$ and $k\in\{1,\ldots,m\}$.
Although we have considerably increased the number of variables of the feasibility problem by adding $lm$ new variables, we have empirically observed that the Douglas--Rachford scheme becomes much more successful with this formulation.

Finally, note that, since the labeling of the colors does not have any significant meaning, every permutation of a proper coloring is also a proper coloring. In our numerical tests we observed that this abundance of equivalent solutions significantly decreases the rate of success of the Douglas--Rachford algorithm. To avoid this problem, we restrict the set of possible colorings to those that assign the first color to the first vertex, that is, we add the constraint
\begin{equation}\label{eq:C5}
x_{1,1}=1.
\end{equation}
We shall also add the additional constraint that all $m$ colors have to be used, i.e.,
\begin{equation}\label{eq:C4}
\sum_{i=1}^n x_{ik}\geq 1,\quad\text{for all }k=1,\ldots,m.
\end{equation}

Let $E=\{e_1,\ldots, e_l\}$ be the set of edges, where $e_p\in\{1,\ldots,n\}^2$ for every $p=1,\ldots,l$. Let $I:=\{1,\ldots,n\}$ and $P:=\{n+1,\ldots,n+l\}$, and let $K:=\{1,\ldots,m\}$ be the set of colors. Then, the $m$-coloring problem determined by constraints~\eqref{eq:C1}, \eqref{eq:C3}, \eqref{eq:C2_0}, \eqref{eq:C5} and~\eqref{eq:C4} can be formulated as a feasibility problem with four constraints:
\begin{equation}\label{eq:formulation_1}
\text{Find }\quad Z\in C_1\cap C_2\cap C_3\cap C_4,
\end{equation}
where $Z=(z_{ik})\in\R^{(n+l)\times m}$ and
\begin{gather*}
C_1:=\left\{Z\in\R^{(n+l)\times m}:z_{ik}\in\{0,1\}, \forall (i,k)\in I\times K\text{ and } \sum_{k=1}^m z_{ik}=1, \forall i\in I\right\},\\
C_2:=\left\{Z\in\R^{(n+l)\times m}:z_{ik}+z_{jk}-z_{p k}=0, \text{with }e_{p-n}=\{i,j\} \in E,\forall (p,k)\in P\times K\right\},\\
C_3:=\left\{Z\in\{0,1\}^{(n+l)\times m}:\sum_{i=1}^n z_{ik}\geq 1, \forall k\in K\right\},\\
C_4:=\left\{Z\in\R^{(n+l)\times m}:z_{1,1}=1\right\}.
\end{gather*}
Observe that constraint $C_2$ can be expressed in matrix form as
\begin{equation}\label{eq:C2}
C_2=\left\{Z\in\R^{(n+l)\times m}: AZ=0_{l\times m}\right\},
\end{equation}
where $A=(a_{pq})\in\R^{l\times(n+l)}$ is defined by
$$a_{pq}:=\left\{\begin{array}{rl}1 & \text{if } e_p=\{i,j\}\text{ and }q\in\{i,j\},\\
-1 & \text{if } q=n+p,\\
0 & \text{elsewhere;}
\end{array}\right.$$
for each $p=1,\ldots,l$ and $q\in I\cup P$.

The projections onto each of the above sets can be derived from Fact~\ref{fact:proj_unitvector} and Fact~\ref{fact:proj_nullspace}. The projections of any $Z\in\R^{(n+l)\times m}$ onto $C_1$, $C_3$ and $C_4$ are given, pointwise, by
\begin{gather*}
\left(P_{C_1}(Z)\right)[i,k]=\left\{\begin{array}{ll} 1&\text{if } i\in I,k=\arg\!\max\{z_{i1},z_{i2},\ldots,z_{im}\},\\
        z_{ik}&\text{if } i\in P,\\
                                      0&\text{otherwise};\end{array}\right.\\
\left(P_{C_3}(Z)\right)[i,k]=\left\{\begin{array}{ll} 1&\text{if } i=\arg\!\max\{z_{1k},z_{2k},\ldots,z_{nk}\},\\
        \min\left\{1,\max\left\{0,\round(z_{ik})\right\}\right\}&\text{otherwise};\end{array}\right.\\
\left(P_{C_4}(Z)\right)[i,k]=\left\{\begin{array}{ll} 1&\text{if } i=k=1,\\
z_{ik} &\text{otherwise};\end{array}\right.
\end{gather*}
for each $i\in I\cup P$ and $k\in K$, where the lowest index is chosen in $\arg\!\max$ (the projections onto $C_1$ and $C_3$ may not be unique). Since $A$ is full row rank, the projection onto $C_2$ is given by $$P_{C_2}(Z)=\left({\rm Id}_{n+l}-A^T\left(AA^T\right)^{-1}A\right)Z.$$

\subsection{Adding maximal clique information}\label{subsec:clique}

Let us consider now the so-called \emph{windmill graph} $\Wd(a,b)$, which is the graph constructed for $a\geq 2$ and $b\geq 2$ by joining $b$ copies of a complete graph with $a$ vertices at a shared vertex. A plot of $\Wd(6,4)$ is shown in Figure~\ref{fig:windmill}.

\begin{figure}[ht!]
\centering
\begin{tikzpicture}[scale=.61]%
    \GraphInit[vstyle=Normal]
    \SetVertexNoLabel
    \tikzset{VertexStyle/.style = {draw,shape = circle,minimum size = 14pt,inner sep=0pt}}
    \grComplete[x=-2.9,y=4,rotation=90,prefix=a,form=1,RA=2]{5};
    \grComplete[x=2.9,y=4,rotation=90,prefix=b,form=1,RA=2]{5};
    \grComplete[x=3,y=-4,rotation=55,prefix=d,form=1,RA=2]{5};
    \grComplete[x=-2.8,y=-4,rotation=55,prefix=e,form=1,RA=2]{5};
    \Vertex[x=0, y=0] {c0}{1};
    \EdgeFromOneToAll{c}{a}{0}{5};
    \EdgeFromOneToAll{c}{b}{0}{5};
    \EdgeFromOneToAll{c}{d}{0}{5};
    \EdgeFromOneToAll{c}{e}{0}{5};
    \AssignVertexLabel{a}{2,...,6};
    \AssignVertexLabel{b}{7,...,11};
    \AssignVertexLabel{c}{1};
    \AssignVertexLabel{d}{12,...,16};
    \AssignVertexLabel{e}{17,...,21};
\end{tikzpicture}
\caption{Plot of the windmill graph $\Wd(6,4)$.}\label{fig:windmill}
\end{figure}
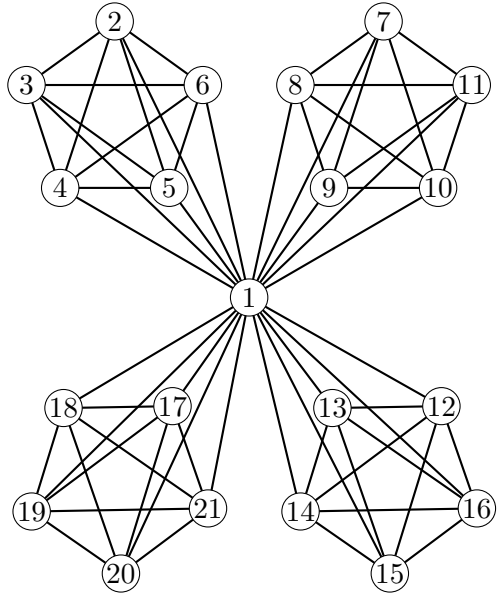

Every windmill graph $\Wd(a,b)$ can be easily $a$-colored (there are $a((a-1)!)^b$ different ways). Despite this abundance of valid colorings, the Douglas--Rachford scheme described in the previous section fails to find a solution rather often, see the results in Figure~\ref{fig:Exp_WindmillCliques}. This graph has an additional available information that can be used: it has $b$ maximal cliques of length~$a$, and each color can be used at most once within each maximal clique.

Let $Q\subset 2^V$ be a nonempty subset of maximal cliques of the graph $G=(V,E)$ and let $\widehat E:=E\cup Q$. Let $Q=\{e_{l+1},\ldots,e_{r}\}$, with $r\geq l+1$. Thus, $\widehat E=\{e_1,\ldots,e_l,e_{l+1},\ldots,e_{r}\}$. The maximal clique information can be easily added into constraint $C_2$ in~\eqref{eq:C2}. Indeed, let
\begin{equation*}
\widehat C_2:=\left\{Z\in\R^{\left(n+r\right)\times m}: \widehat AZ=0_{r\times m}\right\},
\end{equation*}
where $\widehat A=(\widehat a_{pq})\in\R^{r\times(n+r)}$ is defined by
$$\widehat a_{pq}:=\left\{\begin{array}{rl}1 & \text{if } q\in e_p,\\
-1 & \text{if } q=n+p,\\
0 & \text{elsewhere;}
\end{array}\right.$$
for each $p=1,\ldots,r$ and $q\in \{1,\ldots,n+r\}$. This is clearly an equivalent formulation of the $m$-coloring problem, where we have added $(r-l)m$ new variables (now $Z\in \R^{\left(n+r\right)\times m}$), which correspond to the (redundant) information that each color can only be used once within each maximal clique. Despite that, this formulation can be advantageous, as shown in Figure~\ref{fig:Exp_WindmillCliques}. For some particular graphs, adding this information can be crucial, see Table~\ref{tbl:3-SAT}, where we compare two reformulations of 3-SAT problems with and without maximal clique information. Explaining these reformulations is the subject of the next section.

\subsection{Formulating 3-SAT as 3-coloring}\label{subsec:3sat}

A \emph{Boolean variable} takes logical values:  True (T) or False (F). A \emph{literal} is either a variable or its negation ($\lnot$). A \emph{clause} is a disjunction ($\lor$) of literals. A formula in  \emph{conjunctive normal form} is a conjunction ($\land$) of clauses.
Given a formula in conjunctive normal form with $3$ literals per clause, the \emph{3-SAT} (\emph{3-satisfiability}) problem consists in determining if there exists an assignment of variables that makes the formula true. Specifically, let $x_1, \ldots, x_n$ be $n$ Boolean variables and consider $m$ clauses $\theta_1, \ldots, \theta_m$, where each clause is the disjunction of $3$ literals, $$\theta_j=t_1^j\lor t_2^j \lor t_3^j, \quad \text{for all } j=1, 2, \ldots, m;$$
with $t_1^j,t_2^j,t_3^j\in\bigcup_{i=1}^n\{x_i,\lnot x_i\}$. Let $\phi$ be the formula comprising the conjunction of all the clauses:
$$\phi=\theta_1\land \theta_2\land \cdots \land\theta_m.$$
Then, the 3-SAT problem consists in determining if there exists an assignment of the variables that makes the formula $\phi$ true.

 \begin{example}\label{exa:3sat}
 	Consider the following 3-SAT problem with $3$ variables and $2$ clauses:
 	$$\phi=\left( x_1 \lor x_2 \lor x_3 \right)\land \left( \lnot x_1 \lor x_2 \lor \lnot x_3 \right).$$
 	There are several solutions to $\phi$ such as $(F,T,F)$, $(T,T,F)$ or $(F,F,T)$, among others.
 \end{example}

A 3-SAT problem can be reduced to a 3-coloring problem by using gadgets. A \emph{gadget}
is a small graph whose coloring solves some part of the problem. Using a set of gadgets and connecting them in an appropriate manner, the 3-coloring problem of the full graph can be made equivalent to solving the 3-SAT problem. We start by creating $n+1$ gadgets, one for each variable and an additional one for setting the interpretation of the colors:

 \begin{itemize}
 	\item[(a)] Create a gadget formed by a complete graph with 3 ``color-meaning'' nodes named T, F and G, see Figure~\ref{fig:gadgetcommon}(a). As this gadget is a complete graph, a different color must be assigned to each node. The color assigned to node T will be interpreted as True, the color assigned to F as False, and the remaining color assigned to G (\emph{ground} node) will not have any special interpretation.
 	
 	\item[(b)] For each variable $x_i$, construct a gadget with 2 connected nodes, one associated to $x_i$ and the other to $\lnot x_i$. Link both of them to the node G to create a gadget of the form in Figure~\ref{fig:gadgetcommon}(b). This gadget forces a logical choice in the value of the variables. Thus, every variable will be assigned to either T or F, and the assignment of every variable and its complement will be consistent.
 \end{itemize}

 \begin{figure}[ht!]
 	\centering
 	\subfigure[]{
 		\begin{tikzpicture}[scale=.6,transform shape]%
 		\SetVertexNoLabel
 		\begin{scope}[VertexStyle/.append style = {minimum size = 25pt, draw=black}]
 		\Vertex[x=-2.5, y=4] {f0}
 		\Vertex[x=0, y=0] {g0}
 		\Vertex[x=2.5, y=4] {t0}
 		\AssignVertexLabel{f}{\Large F}
 		\AssignVertexLabel{g}{\Large G}
 		\AssignVertexLabel{t}{\Large T}
 		\end{scope}
 		
 		\Edges(f0,g0,t0,f0)
 		\end{tikzpicture}}%
 	\qquad%
 	\subfigure[]{
 		\begin{tikzpicture}[scale=.6,transform shape]%
 		\GraphInit[vstyle=Normal]
 		\SetVertexNoLabel
 		\begin{scope}[VertexStyle/.append style = {minimum size = 25pt, draw=black}]
 		\Vertex[x=0, y=4] {g0}
 		\Vertex[x=-1.5, y=0] {t0}
 		\Vertex[x=1.5, y=0] {t1}
 		\AssignVertexLabel{t}{\Large $x_i$,\Large $\lnot{x_i}$}
 		\AssignVertexLabel{g}{\Large G}
 		\end{scope}
 		
 		\Edges(t0,t1)
 		\tikzstyle{EdgeStyle}=[dashed]
 		\Edges(t0,g0,t1)
 		\end{tikzpicture}}%
 	\caption{Gadgets of the variables and colors.}\label{fig:gadgetcommon}
 \end{figure}
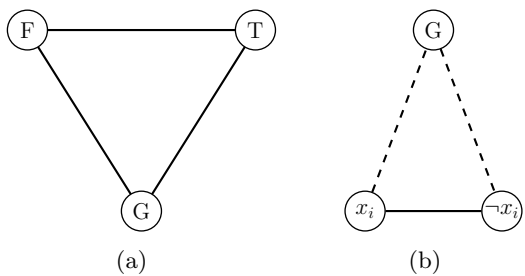

Next, we present two different formulations of the gadgets corresponding to the clauses.

 \begin{itemize}
    \item [(c)] For the \emph{4-nodes formulation}, take each clause $\theta=t_1\lor t_2 \lor t_3$ and create the gadget in Figure~\ref{fig:gadget2}(a) with the nodes associated to $t_1$, $t_2$, $t_3$, F, G, and 4 new nodes. The new unlabeled nodes do not have any special meaning, but, by the construction of the gadgets, every 3-coloring of a clause gadget will assign the same color as T to at least one of the literals $t_1$, $t_2$ or $t_3$. Thus, a valid 3-coloring of the gadget will make the corresponding clause to be True.

        For the \emph{5-nodes formulation}, the process is similar but introduces five new nodes instead of four: the gadget is shown in Figure~\ref{fig:gadget2}(b).
\end{itemize}
 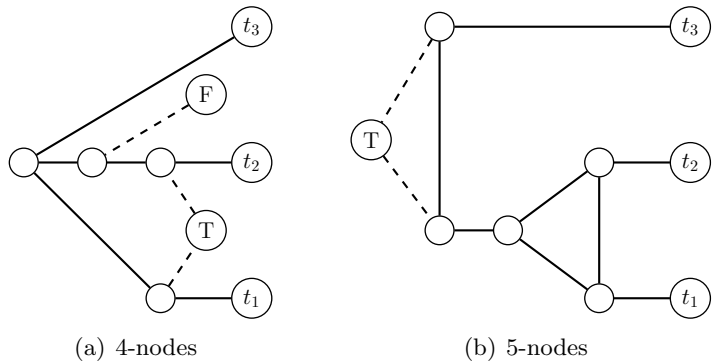
\begin{figure}[ht!]
 	\centering
 	\subfigure[4-nodes]{
 		\begin{tikzpicture}[scale=.6,transform shape]%
 		\GraphInit[vstyle=Normal]
 		\SetVertexNoLabel
 		\begin{scope}[VertexStyle/.append style = {minimum size = 25pt, draw=black}]
 		\Vertex[x=-1, y=0] {t0}
 		\Vertex[x=-1, y=3] {t1}
 		\Vertex[x=-1, y=6] {t2}
 		\AssignVertexLabel{t}{\Large $t_1$,\Large $t_2$,\Large $t_3$}
 		\Vertex[x=-2, y=1.5] {tr0}
 		\AssignVertexLabel{tr}{\Large T}
 		\Vertex[x=-2, y=4.5] {f0}
 		\AssignVertexLabel{f}{\Large F}
 		\end{scope}
 		
 		\Vertex[x=-3, y=0] {c0}
 		\Vertex[x=-3, y=3] {c1}
 		\Vertex[x=-4.5, y=3] {c2}
 		\Vertex[x=-6, y=3] {c3}
 		
 		\Edges(t0,c0,c3,t2)
 		\Edges(t1,c1,c2,c3)
 		\tikzstyle{EdgeStyle}=[dashed]
 		\Edges(c0,tr0,c1)
 		\Edges(f0,c2)	
 		\end{tikzpicture}}
 	\qquad%
 	\subfigure[5-nodes]{
 		\begin{tikzpicture}[scale=.6,transform shape]%
 		\GraphInit[vstyle=Normal]
 		\SetVertexNoLabel
 		\begin{scope}[VertexStyle/.append style = {minimum size = 25pt, draw=black}]
 		\Vertex[x=9, y=0] {t0}
 		\Vertex[x=9, y=3] {t1}
 		\Vertex[x=9, y=6] {t2}
 		\AssignVertexLabel{t}{\Large $t_1$,\Large $t_2$,\Large $t_3$}
 		\Vertex[x=2, y=3.5] {m0}
 		\AssignVertexLabel{m}{\Large T}
 		
 		\end{scope}
 		
 		\Vertex[x=7, y=0] {c0}
 		\Vertex[x=7, y=3] {c1}
 		\Vertex[x=3.5, y=6] {c2}
 		\Vertex[x=5, y=1.5] {e0}
 		\Vertex[x=3.5, y=1.5] {e1}
 		
 		\Edges(t0,c0,e0,e1)
 		\Edges(t1,c1)
 		\Edges(t2,c2,e1)
 		\Edges(c0,c1,e0)
 		\tikzstyle{EdgeStyle}=[dashed]
 		\Edges(c2,m0,e1)
 		\end{tikzpicture}}
 	\caption{Gadgets of the clauses.}\label{fig:gadget2}
 \end{figure}
 \begin{itemize}
    \item[(d)] Finish building the graph by connecting the clause gadgets together using the edges from the common gadgets from Figure~\ref{fig:gadgetcommon}. Full graphs for the four and five node formulations of the 3-SAT problem in Example~\ref{exa:3sat} are shown in Figure~\ref{fig:graph_gadget}.
 \end{itemize}

The graph resulting from putting all these gadgets together in the 4-nodes formulation has a total of $3+2n+4m$ nodes and $3+3n+9m$ edges. Observe that the graph has $n+1$ maximal cliques with $3$~nodes, one for each gadget of type~(a) and~(b). In the 5-nodes formulation, the resulting graph has a total of $3+2n+5m$ nodes and $3+3n+10m$ edges. The number of maximal cliques with 3 nodes has increased up to $n+1+2m$, one for each gadget of type~(a) and~(b) and two for each gadget of type~(c). A 3-coloring of the graph built under one of these two formulations corresponds to a solution of the associated 3-SAT problem. A solution to the 3-SAT problem in Example~\ref{exa:3sat}  using both formulations is shown in Figure~\ref{fig:graph_gadget}.

 \begin{figure}[ht!]
 \centering
 \subfigure[4-nodes formulation.]{
 	\centering
 	\begin{tikzpicture}[scale=.69,transform shape]%
 	\GraphInit[vstyle=Normal]
 	\SetVertexNoLabel
 	\begin{scope}[VertexStyle/.append style = {minimum size = 25pt, color=green!50, draw=black}]
 	\Vertex[x=-1, y=1] {t0}
 	\Vertex[x=1.2, y=3] {tt1}
 	\Vertex[x=1.4, y=5] {tt2}
 	\Vertex[x=2, y=8] {tr0}
 	\AssignVertexLabel{tr}{\Large T}
 	\end{scope}
 	
 	\begin{scope}[VertexStyle/.append style = {minimum size = 25pt, color=red!50, draw=black}]
 	\Vertex[x=-1.2, y=3] {t1}
 	\Vertex[x=-1.4, y=5] {t2}
 	\Vertex[x=1, y=1] {tt0}
 	\Vertex[x=-0.5, y=8.9] {f0}
 	\AssignVertexLabel{f}{\Large F}
 	\end{scope}
 	
 	\begin{scope}[VertexStyle/.append style = {minimum size = 25pt, color=blue!60, draw=black}]
 	\Vertex[x=0, y=6.5] {g0}
 	\AssignVertexLabel{g}{\Large G}
 	\end{scope}
 	
 	\AssignVertexLabel{t}{\Large $x_1$,\Large ${x_2}$,\Large $x_3$}
 	\AssignVertexLabel{tt}{\Large $\lnot{x_1}$,\Large $\lnot{x_2}$,\Large $\lnot{x_3}$}
 	\tikzstyle{VertexStyle}=[{circle, draw=black, minimum size=15pt}]
 	\Vertex[x=-3, y=1] {c0}
 	\Vertex[x=-3, y=3] {c1}
 	\Vertex[x=-4.5, y=3] {c2}
 	\Vertex[x=-6, y=3] {c3}
 	\Vertex[x=3, y=1] {cc0}
 	\Vertex[x=3, y=3] {cc1}
 	\Vertex[x=4.5, y=3] {cc2}
 	\Vertex[x=6, y=3] {cc3}
 	
 	\AddVertexColor{green!50}{cc2,c2}
 	\AddVertexColor{red!50}{cc3,c0}
 	\AddVertexColor{blue!60}{cc0,cc1,c1,c3}
 	
 	\Edges(t0,c0,c3,t2)
 	\Edges(t1,c1,c2,c3)
 	\Edges(tt0,cc0,cc3,tt2)
 	\Edges(cc1,cc2,cc3)
 	\Edges(g0,f0,tr0,g0)
 	
 	\tikzstyle{EdgeStyle}=[bend right=20]
 	\Edges(t0,tt0)
 	\Edges(t1,tt1)
 	\Edges(t2,tt2)
 	
 	\tikzstyle{EdgeStyle}=[bend right=30]
 	\Edges(cc1,t1)
 	
 	\tikzstyle{EdgeStyle}=[dashed, bend right=60]
 	\Edges(tr0,c0)
 	\tikzstyle{EdgeStyle}=[dashed, bend right=40]
 	\Edges(tr0,c1)
 	\tikzstyle{EdgeStyle}=[dashed, bend right=40]
 	\Edges(f0,c2)
 	\tikzstyle{EdgeStyle}=[dashed, bend right=30]
 	\Edges(cc0,tr0)
 	\tikzstyle{EdgeStyle}=[dashed, bend right=20]
 	\Edges(cc1,tr0)
 	\tikzstyle{EdgeStyle}=[dashed, bend right=65]
 	\Edges(cc2,f0)	
 	\tikzstyle{EdgeStyle}=[dashed]
 	\Edges(t0,g0)
 	\Edges(t1,g0)
 	\Edges(t2,g0)
 	\Edges(tt0,g0)
 	\Edges(tt1,g0)
 	\Edges(tt2,g0)	
 	\end{tikzpicture}
 }\\
\subfigure[5-nodes formulation.]{
	\centering
	\begin{tikzpicture}[scale=0.69,transform shape]%
	\GraphInit[vstyle=Normal]
	\SetVertexNoLabel
	\begin{scope}[VertexStyle/.append style = {minimum size = 25pt, color=green!50, draw=black}]
	\Vertex[x=7, y=8] {tr0}
	\AssignVertexLabel{tr}{\Large T}
	\Vertex[x=5.4, y=0] {t0}
	\Vertex[x=7, y=2] {tt1}
	\Vertex[x=7.2, y=4] {tt2}
	\end{scope}
	\begin{scope}[VertexStyle/.append style = {minimum size = 25pt, color=red!50, draw=black}]
	\Vertex[x=8, y=6] {f0}
	\AssignVertexLabel{f}{\Large F}
	\Vertex[x=6.6, y=0] {tt0}
	\Vertex[x=5, y=2] {t1}
	\Vertex[x=4.8, y=4] {t2}
	\end{scope}
	\begin{scope}[VertexStyle/.append style = {minimum size = 25pt, color=blue!60, draw=black}]
	\Vertex[x=6, y=6] {g0}	
	\AssignVertexLabel{g}{\Large G}
	\end{scope}	
	
	\AssignVertexLabel{t}{\Large $x_1$,\Large ${x_2}$,\Large $x_3$}
	\AssignVertexLabel{tt}{\Large $\lnot{x_1}$,\Large $\lnot{x_2}$,\Large $\lnot{x_3}$}
	
	\tikzstyle{VertexStyle}=[{circle, draw=black, minimum size=15pt}]
	
	\Vertex[x=3, y=0] {c0}
	\Vertex[x=3, y=2] {c1}
	\Vertex[x=1, y=4] {c2}
	\Vertex[x=2, y=1] {e0}
	\Vertex[x=1, y=1] {e1}
	\Vertex[x=9, y=0] {cc0}
	\Vertex[x=9, y=2] {cc1}
	\Vertex[x=11, y=4] {cc2}
	\Vertex[x=10, y=1] {ee0}
	\Vertex[x=11, y=1] {ee1}
	
	\AddVertexColor{green!50}{e0,cc0}
	\AddVertexColor{red!50}{e1,c0,ee0,cc2}
	\AddVertexColor{blue!60}{c2,c1,cc1,ee1}
	
	\Edges(t0,c0,e0,e1)
	\Edges(t1,c1)
	\Edges(t2,c2,e1)
	\Edges(c0,c1,e0)
	\Edges(tt0,cc0,ee0,ee1)
	\Edges(tt2,cc2,ee1)
	\Edges(cc0,cc1,ee0)	
	\Edges(f0,g0,tr0,f0)
	\tikzstyle{EdgeStyle}=[bend right=20]
	\Edges(t0,tt0)
	\Edges(t1,tt1)
	\Edges(t2,tt2)
	\tikzstyle{EdgeStyle}=[bend left=30]
	\Edges(t1,cc1)
	\tikzstyle{EdgeStyle}=[dashed]
	\Edges(t0,g0)
	\Edges(t1,g0)
	\Edges(t2,g0)
	\Edges(tt0,g0)
	\Edges(tt1,g0)
	\Edges(tt2,g0)	
	\tikzstyle{EdgeStyle}=[bend left=30, dashed=4]
	\Edges(e1,tr0)
	\Edges(c2,tr0)
	\tikzstyle{EdgeStyle}=[bend left=25, dashed=4]
	\Edges(tr0,ee1)
	\tikzstyle{EdgeStyle}=[bend left=30, dashed=4]
	\Edges(tr0,cc2)
	\end{tikzpicture}
}
 	\caption{Two different formulations of the 3-SAT problem in Example~\ref{exa:3sat} as a 3-coloring problem. The same solution of the 3-SAT problem is shown for both formulations.}\label{fig:graph_gadget}
 \end{figure}
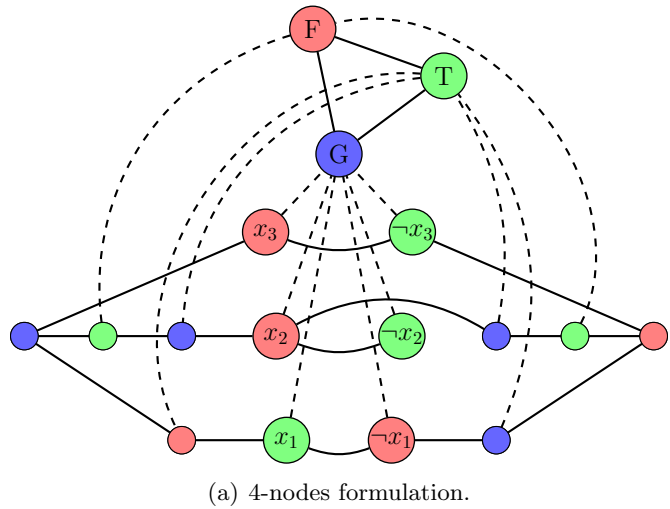
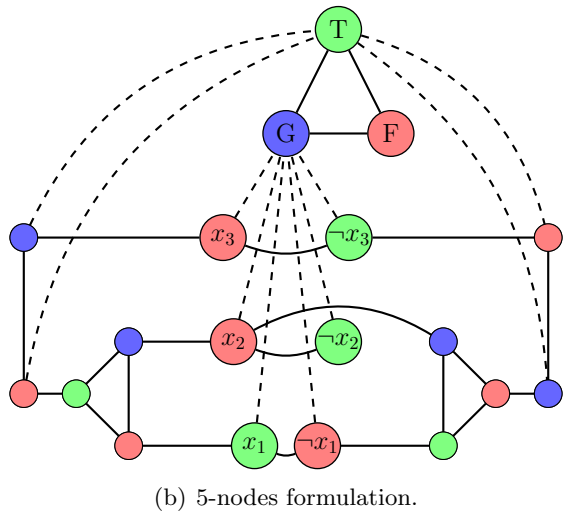

The results of testing the performance of the Douglas--Rachford method for solving a sample of 3-SAT problems using both reformulations as 3-coloring problems is shown in Section~\ref{sec:numerical}, see Table~\ref{tbl:3-SAT}. With a totally different direct formulation, the Douglas--Rachford method was first shown to be successful for solving 3-SAT problems in~\cite{Elser}.

\section{Precoloring and list coloring problems}\label{sec:precoloring}

In many practical graph coloring problems, the set of eligible colors for each of the nodes can be different. This is the case in the \emph{precoloring problem}, a slight modification of the graph coloring problem in which a subset of the vertices has been preassigned to some colors. The task is to color the remaining vertices to obtain a valid coloring of the entire graph. More generally, in the \emph{list coloring problem}, each vertex can only be colored from a list of admissible colors.

The notion of list coloring was  independently introduced by Vizing~\cite{V76}, and Erd\"{o}s, Rubin and Taylor~\cite{ERT79}. Given a graph $G=(V,E)$ and a set of $m$ colors $K=\{1,\ldots,m\}$, let $L:V\rightrightarrows K$ be a mapping assigning to each vertex $v\in V$ a list of admissible colors $L(v)\subseteq K$. Thus, the list coloring problem consists in finding a proper coloring of the vertices of the graph $G$ verifying that the color assigned to each vertex belongs to its list of admissible colors; that is, $c(i)\neq c(j) \text{ for all } \{i,j\}\in E, \text{ and } c(i)\in L(i) \text { for all } i\in V$. Note that an ordinary graph coloring problem is a special case of list coloring where $L(i)=K$ for every vertex $i\in V$, and so are the precoloring problems, where the precolored vertices have a list of admissible colors with length one.

List coloring problems can be reduced to standard graph coloring problems. To this aim, one shall add a complete subgraph with $m$ new nodes, each one representing a color in $K$, and connect each vertex $i\in V$ with the new nodes that represent the colors not belonging to~$L(i)$. If we denote by $|A|$ the cardinality of a finite set $A$, the new graph will have $n+m$ nodes, $l^\star=|E|+\frac{m(m-1)}{2}+nm-\sum_{i=1}^n |L(i)|$ edges, and an additional maximal clique of length $m$. In this way, any valid $m$-coloring of the extended graph will lead to a solution for the original list coloring problem. An example of such construction with a wheel graph of $5$ nodes is shown in Figure~\ref{fig:listcoloring}.

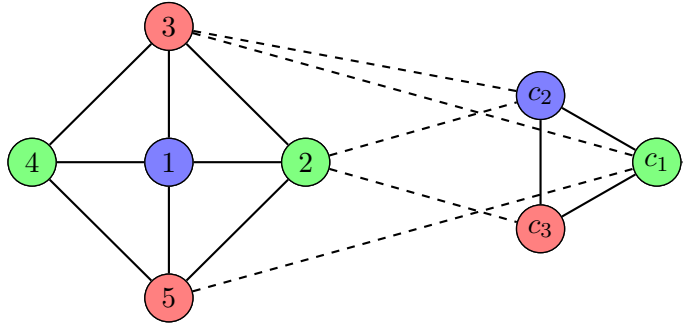
\begin{figure}[ht!]
	\centering
	\begin{tikzpicture}[scale=.6]%
	\GraphInit[vstyle=Normal]
	\SetVertexNoLabel
	\tikzstyle{EdgeStyle}=[thick]
	\grWheel[rotation=0,prefix=b,RA=3]{5};
	
	\begin{scope}[xshift=9cm]
	\tikzstyle{EdgeStyle}=[thick]
	\grComplete[rotation=0,prefix=c,RA=1.7]{3}
	
	\end{scope}
	
	\AddVertexColor{green!50}{c0,b0,b2}
	\AddVertexColor{blue!50}{c1,b4}
	\AddVertexColor{red!50}{c2,b1,b3}
	\AssignVertexLabel{c}{$c_1$,$c_2$,$c_3$}
	\AssignVertexLabel{b}{2,...,5,1};
	
 	\tikzstyle{EdgeStyle}=[dashed]
 	\Edges(b0,c1)
 	\Edges(b1,c0)
 	\Edges(b3,c0)
 	\Edges(b1,c1)
 	\Edges(b0,c2)	
	\end{tikzpicture}
	\caption{List coloring reduced to graph coloring of a wheel graph of 5 nodes with admissible colors lists $L(1)=L(4)=\{1,2,3\}, L(2)=\{1\}, L(3)=\{3\}$, and $L(5)=\{2,3\}$. Nodes $c_1$, $c_2$ and $c_3$ represent colors 1, 2 and 3, respectively.}\label{fig:listcoloring}
\end{figure}

Note that the new feasibility problem is defined in $\R^{(n+m+l^\star)\times m}$. Constraint $C_4$ has to be changed, as it no longer makes sense. We have $m$ new nodes, labeled $n+1,\ldots,n+m$, each of them representing a color. To include this information, we shall replace $C_4$ by
$$
C_4^\star:=\left\{Z\in\R^{(n+m+l^\star)\times m}:z_{n+k,k}=1, \forall k\in K\right\}.
$$
Thereby, the solution set is $C_1\cap C_2\cap C_3\cap C_4^\star$. The projection onto $C_4^\star$ is given by
$$
\left(P_{C_4^\star}(Z)\right)[i,k]=\left\{\begin{array}{ll} 1&\text{if } i=k+n,\\
z_{ik} &\text{otherwise}.\end{array}\right.
$$

As the increase in the number of nodes and edges may cause the DR algorithm to become slower, another option here would be to directly modify the constraint~$C_1$ to only allow admissible colors, that is, to replace it by the set
$$
\overline{C}_1:=\left\{Z\in\R^{(n+l)\times m}:z_{ik}\in\{0,1\}, \forall (i,k)\in I\times K\text{ and } \sum_{k\in L(i)} z_{ik}=1, \forall i\in I\right\},
$$
whose projection is given by
$$
\left(P_{\overline{C}_1}(Z)\right)[i,k]=\left\{\begin{array}{ll} 1&\text{if } i\in I,k=\arg\!\max\{z_{ij}, j\in L(i)\},\\
z_{ik}&\text{if } i\in P,\\
0&\text{otherwise}.\end{array}\right.
$$
Constraint $C_4$ has to be removed from the feasibility problem, and the solution set becomes $\overline{C}_1\cap C_2\cap C_3$. We shall compare the performance of DR with both formulations in Section~\ref{sec:numerical}.	

\subsection{Formulating Sudokus as 9-precoloring problems}\label{subsec:Sudokus}

It is easy to formulate \emph{Sudoku puzzles} as graph coloring problems. This kind of puzzles consist in a  $9\times9$ grid, divided in nine $3\times3$ subgrids, with some entries already prefilled. The objective is to fill the remaining cells in such a way that each row, each column and each subgrid contains the digits from $1$ to $9$ exactly once.

We shall model Sudokus as 9-precoloring problems, with the aim of applying DR. The construction of the graph is very simple and intuitive. Each cell in the grid shall be represented by a node. Then, we link two nodes if their respective associated cells lay in the same row, same column or same subgrid (see Figure~\ref{fig:Sudoku_graph}). The graph obtained contains $81$ nodes and $810$ edges. Furthermore, a rich maximal clique information is known. Namely, there are $27$ maximal cliques of size $9$, one per row, one per column and one per subgrid.

\begin{figure}[ht!]
	\Large
	\centering
	\begin{tikzpicture}[scale=.5,transform shape, darkstyle/.style={circle,draw,fill=white,minimum size=22}]
	\def\sepa{2}
	\def\sopa{1.1}
	\def\ma{0.6}
	\def\ra{3.45}
	
	\foreach \x in {0,1,2}
	\foreach \u in {0,1,2}
	{\pgfmathtruncatemacro{\xx}{int(3*\x+\u)}
		\draw[fill=blue!50, draw=blue!50!black, thick, dotted, fill opacity=0.2] (-3*\ma,-\sepa*\xx-\sopa*\x+\ma) rectangle (8*\sepa+2*\sopa+3*\ma,-\sepa*\xx-\sopa*\x-\ma);
		\draw[fill=green!50, draw=green!50!black, thick, dashed, fill opacity=0.2] (\sepa*\xx+\sopa*\x-\ma,3*\ma) rectangle (\sepa*\xx+\sopa*\x+\ma,-8*\sepa-2*\sopa-3*\ma);}
	
	\foreach \x in {0,1,2}
	\foreach \y in {0,1,2}
	{\pgfmathtruncatemacro{\xx}{int(3*\x+1)}
		\pgfmathtruncatemacro{\yy}{int(3*\y+1)}
		\draw[fill=red!50, draw=red!40!black, fill opacity=0.2] (\sepa*\yy+\sopa*\y,-\sepa*\xx-\sopa*\x) circle [radius=\ra cm];}
	
	\foreach \x in {0,1,2}
	\foreach \u in {0,1,2}
	\foreach \y in {0,1,2}
	\foreach \v in {0,1,2}
	{\pgfmathtruncatemacro{\xx}{int(3*\x+\u)}
		\pgfmathtruncatemacro{\yy}{int(3*\y+\v)}
		\pgfmathtruncatemacro{\label}{\xx * 9 +  \yy + 1}
		\node [darkstyle]  (\xx\yy) at (\sepa*\yy+\sopa*\y,-\sepa*\xx-\sopa*\x) {\label};}	
	\end{tikzpicture}
	\caption{Graph formulation of a Sudoku, with maximal cliques highlighted.}\label{fig:Sudoku_graph}
\end{figure}
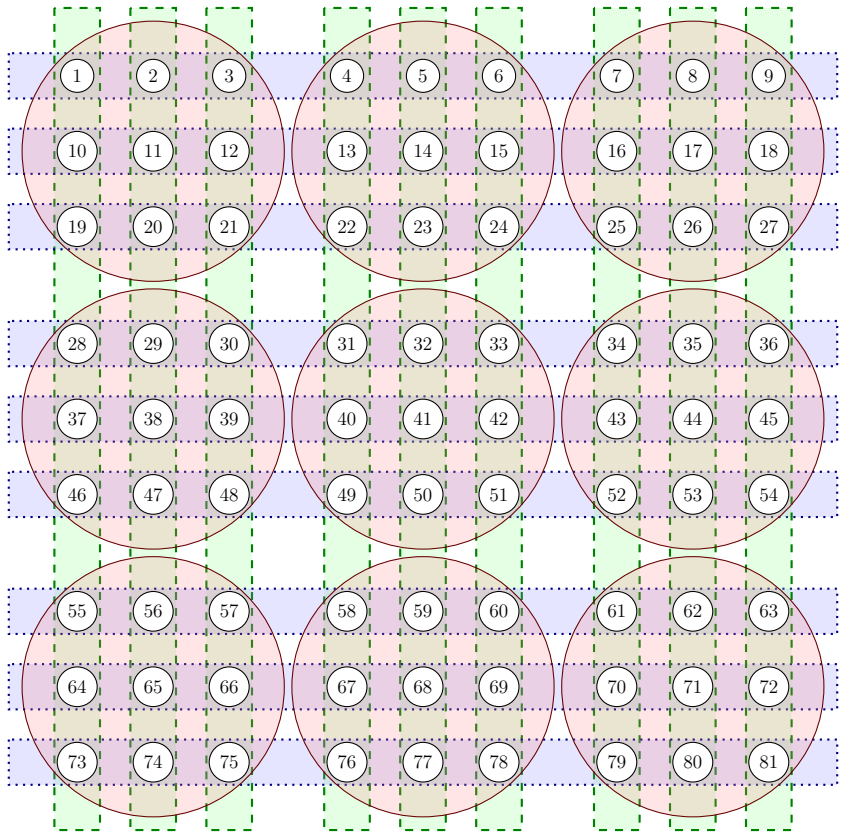

Sudoku puzzles can be directly modeled as integer feasibility programs. Despite the Douglas--Rachford algorithm fails to solve these integer problems, it can be successfully used for solving the puzzles after reformulating them as binary programs, see~\cite[Section~6]{ABTcomb}. We must acknowledge here the fundamental contribution of Veit Elser~\cite{Elser}, who first realized the usefulness of this binary reformulation for the success of the DR algorithm.

We associate a color to each of the $9$ digits of the puzzle. Since some cells of the Sudoku are prefilled, this is actually a graph precoloring problem. A valid coloring of the graph will lead to a solution of the Sudoku, as shown in the example in Figure~\ref{fig:Sudoku_colored}.

\begin{figure}[ht!]
	\centering
	
	\subfigure[Unsolved Sudoku.]{
		\begin{tikzpicture}[scale=0.59]
		\draw (0, 0) grid (9, 9);
		\draw[very thick, scale=3] (0, 0) grid (3, 3);
		
		\setcounter{row}{1}
		\setrow {1}{ }{ }  { }{ }{ }  {7}{ }{9}
		\setrow { }{4}{ }  { }{ }{7}  {2}{ }{ }
		\setrow {8}{ }{ }  { }{ }{ }  { }{ }{ }
		
		\setrow { }{7}{ }  { }{1}{ }  { }{6}{ }
		\setrow {3}{ }{ }  { }{ }{ }  { }{ }{5}
		\setrow { }{6}{ }  { }{4}{ }  { }{2}{ }
		
		\setrow { }{ }{ }  { }{ }{ }  { }{ }{8}
		\setrow { }{ }{5}  {3}{ }{ }  { }{7}{ }
		\setrow {7}{ }{2}  { }{ }{ }  { }{4}{6}
		
		\node[anchor=center] at (4.5, -0.5) {};
		
		\end{tikzpicture}}
	\subfigure[Graph coloring of Sudoku.]{\includegraphics[width=0.4\linewidth]{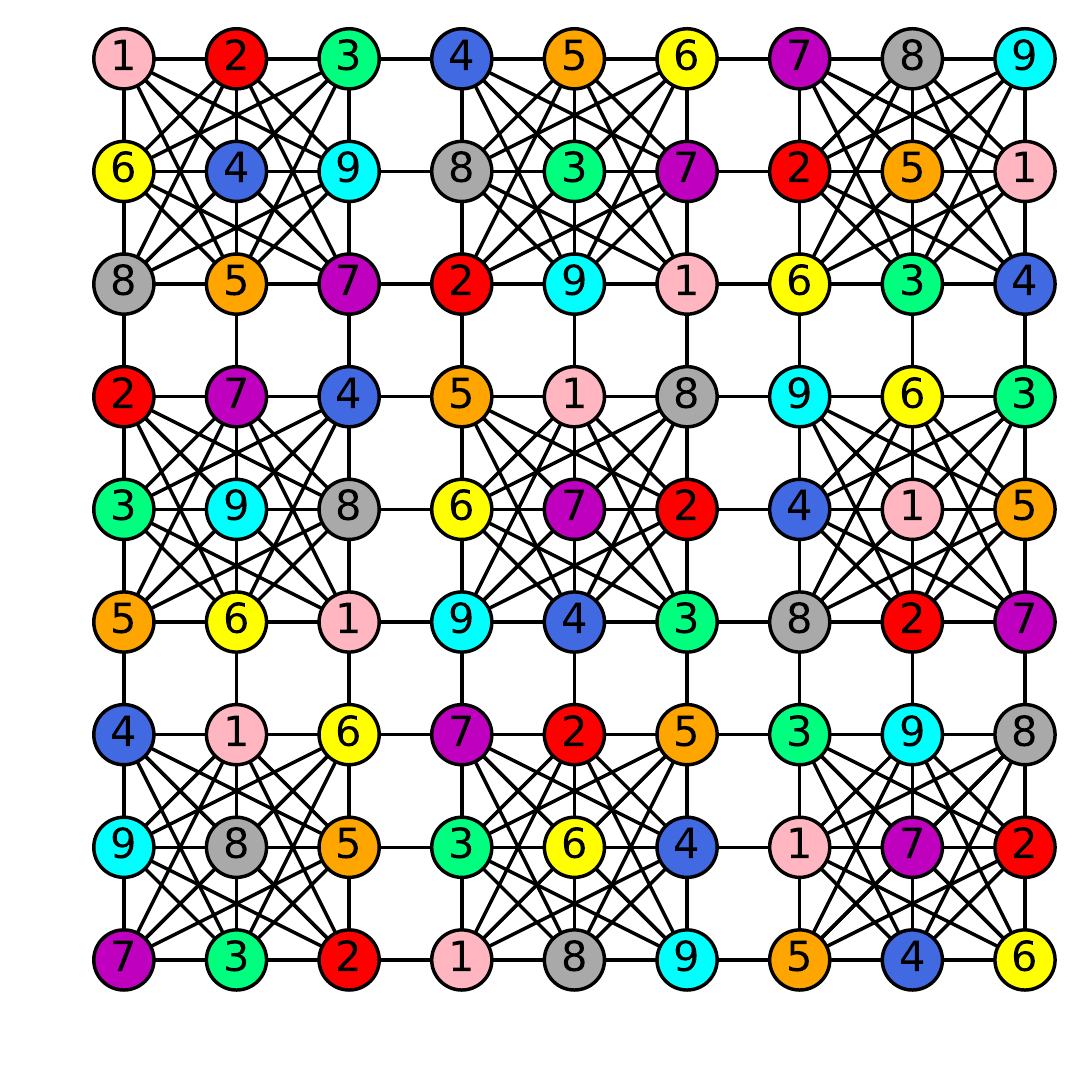}}	
	\caption{Sudoku solved by graph coloring.}\label{fig:Sudoku_colored}
\end{figure}

\section{The 8-queens puzzle and generalizations}\label{sec:8Q}

The \emph{8-queens puzzle} consists in placing eight chess queens on an $8\times8$ chessboard, so that none of them attack any other. Since a chess queen can be moved any number of squares vertically, horizontally or diagonally, the puzzle's constraints can be formulated as: there is at most one queen at each row, each column and each diagonal. The reformulation of an 8-queens puzzle as a graph coloring problem is similar to the one shown for Sudokus. Each square in the chessboard is represented by a node, and two nodes are linked if their corresponding squares lay on the same column, row or diagonal. The graph has $64$ nodes, $728$ links and $42$ maximal cliques.

To solve the 8-queens puzzle, it is not necessary to color all the nodes, but only $8$ of them with only one color. Thus, we are dealing with a partial graph coloring problem, in which we add the constraint that the color has to be used exactly $8$ times. We must then remove the set $C_4$ in~\eqref{eq:formulation_1} and replace the sets $C_1$ and $C_3$ by
\begin{gather*}
\widecheck C_1:=\left\{Z\in\R^{(n+l)\times m}:z_{ik}\in\{0,1\}, \forall (i,k)\in I\times K\text{ and } \sum_{k=1}^m z_{ik}\leq 1, \forall i\in I\right\},\\
\widecheck C_3:=\left\{Z\in\{0,1\}^{(n+l)\times m}:\sum_{i=1}^n z_{ik}=q, \forall k\in K\right\},
\end{gather*}
where $n=q=8$ and $m=1$ (puzzles with more colors can be considered). Hence, the solution set of the puzzle is $\widecheck C_1\cap C_2\cap\widecheck C_3$. The projections onto $\widecheck C_1$ and $\widecheck C_3$ are given by
\begin{gather*}
\left(P_{\widecheck C_1}(Z)\right)[i,k]=\left\{\begin{array}{ll} \min\left\{1,\max\{0,\round(z_{ik})\}\right\}&\text{if } i\in I,k=\arg\!\max\{z_{i1},z_{i2},\ldots,z_{im}\},\\
z_{ik}&\text{if } i\in P,\\
0&\text{otherwise};\end{array}\right.\\
\left(P_{\widecheck C_3}(Z)\right)[i,k]=\left\{\begin{array}{ll} 1&\text{if } i\in Q_{k,q},\\
\min\left\{1,\max\{0,\round(z_{ik})\}\right\}&\text{if } i\in P,\\
0&\text{otherwise;}\end{array}\right.
\end{gather*}
where, for a given color $k\in K$, we denote by $Q_{k,q}\subset I$ the set of indices corresponding to the $q$ largest values in $\{z_{1k}, z_{2k},\ldots, z_{nk}\}$ (lowest index is chosen in case of tie).

The 8-queens puzzle can be easily posed for any size of the chessboard. The problem has been generalized in many different directions, see~\cite{BS09} for a recent survey. One of these generalizations is the $n\text{-queens}^2$ puzzle, where one must cover an entire chessboard $n\times n$ with $n^2$ queens, so that two queens of the same color do not attach each other. This problem is actually the $n$-coloring problem of the chessboard queens graph, so it can be directly modeled as explained in Section~\ref{sec:Modeling_graphcoloring} using formulation~\eqref{eq:formulation_1}.
Different shapes can also be considered: we show in Figure~\ref{fig:puzzles}(b) a chessboard with a hole, and in Figure~\ref{fig:puzzles}(c) a puzzle dedicated to Jonathan Borwein. A solution to these puzzles, obtained with DR, is shown in Figure~\ref{fig:puzzles_solved}.

The use of the Douglas--Rachford algorithm for solving the $n$-queens puzzle is proposed and studied in~\cite{Schaad}. One of the main advantages of formulating these puzzles as graph coloring problems is that it is straightforward to model many variations of the problem. For instance, to model the knights puzzle, a similar puzzle played with knights instead of queens, one only needs to change the links of the chessboard graph, see Figure~\ref{fig:puzzles}(a).

\begin{figure}[ht!]
	\centering	
	\subfigure[Classic chessboard.]{\includegraphics[width=0.3\linewidth]{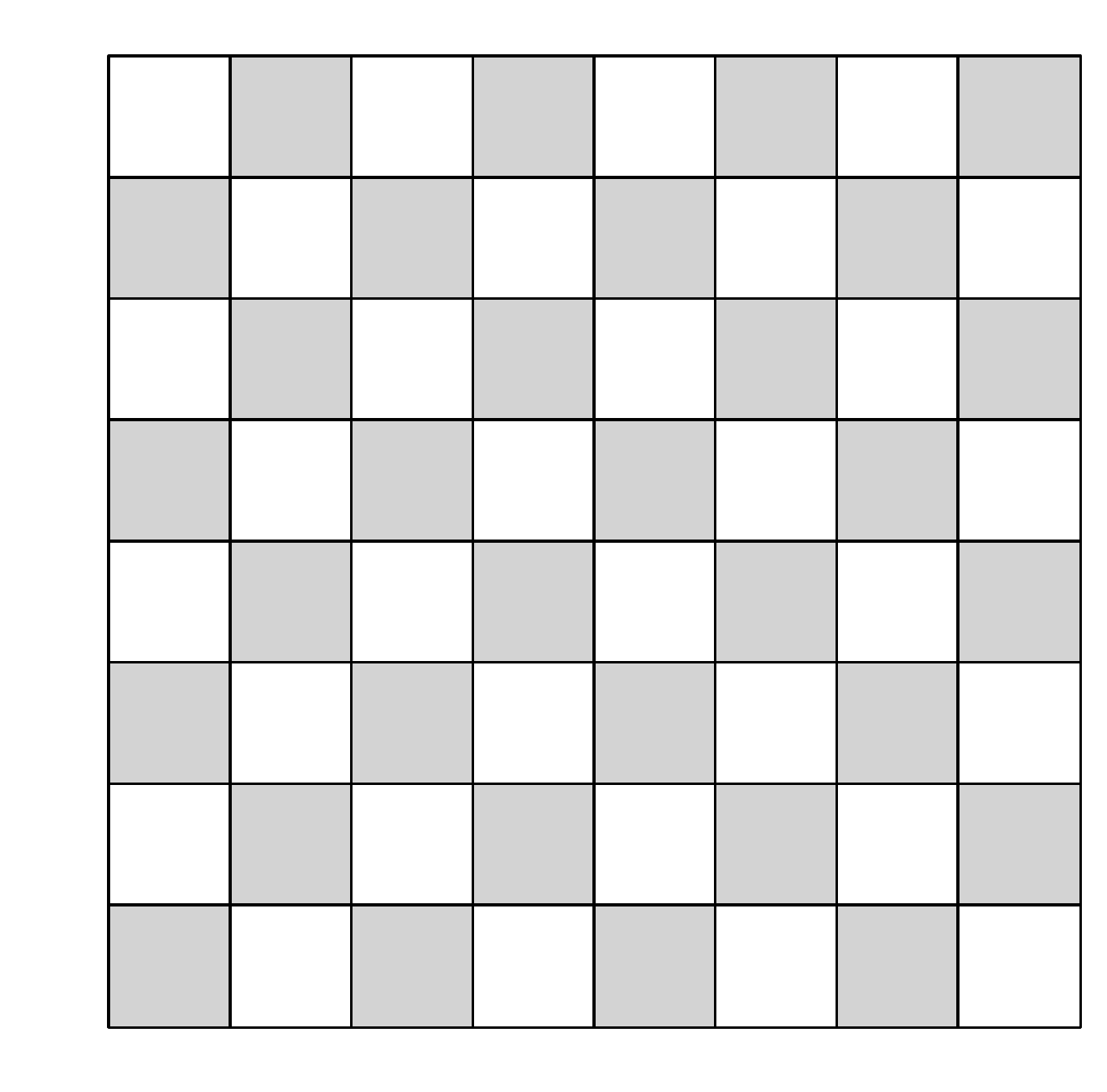}}\qquad
	\subfigure[Chessboard with a hole.]{\quad\includegraphics[width=0.3\linewidth]{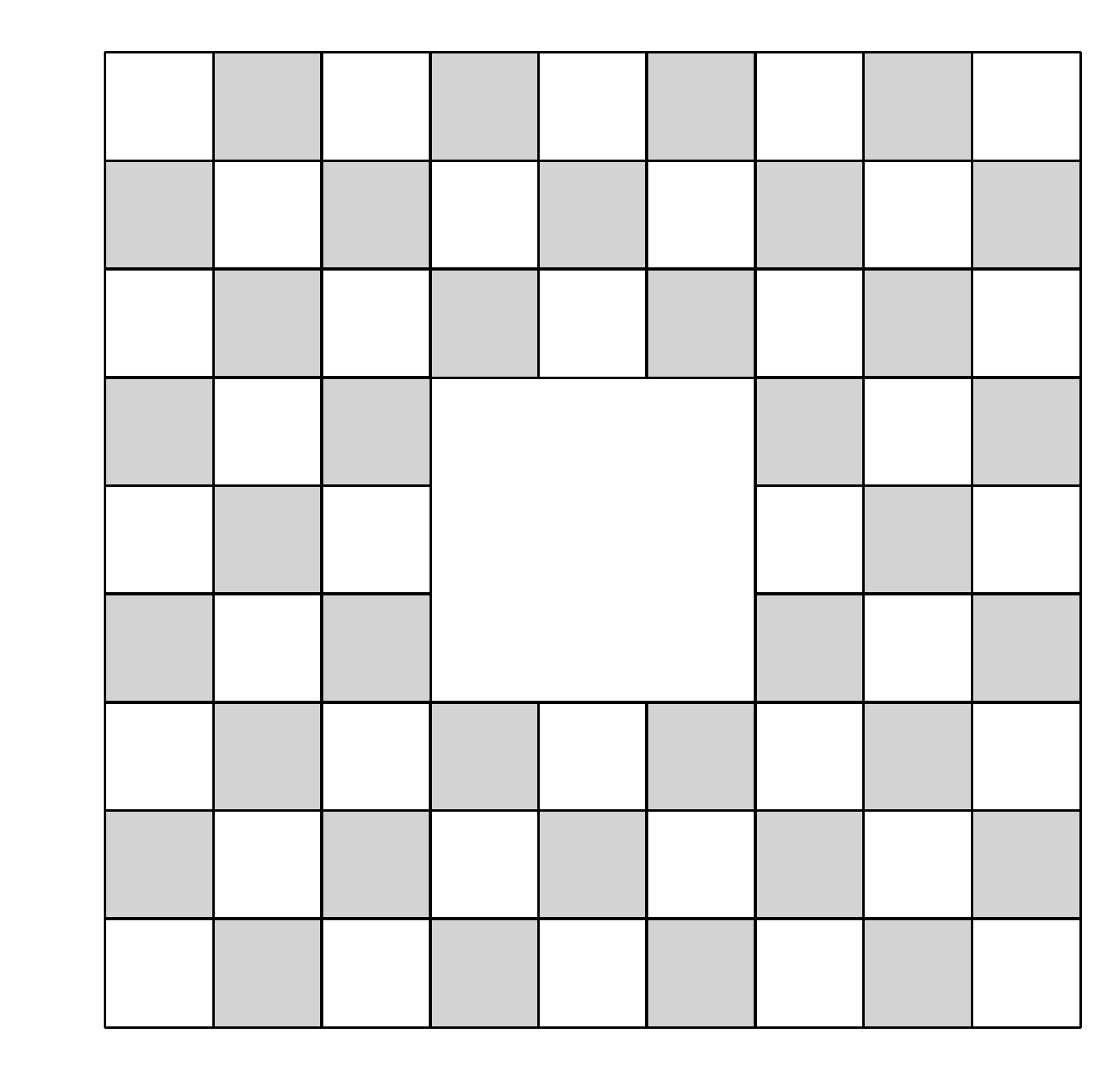}}
	\subfigure[`$\pi$-zzle'.]{\includegraphics[width=0.5\linewidth]{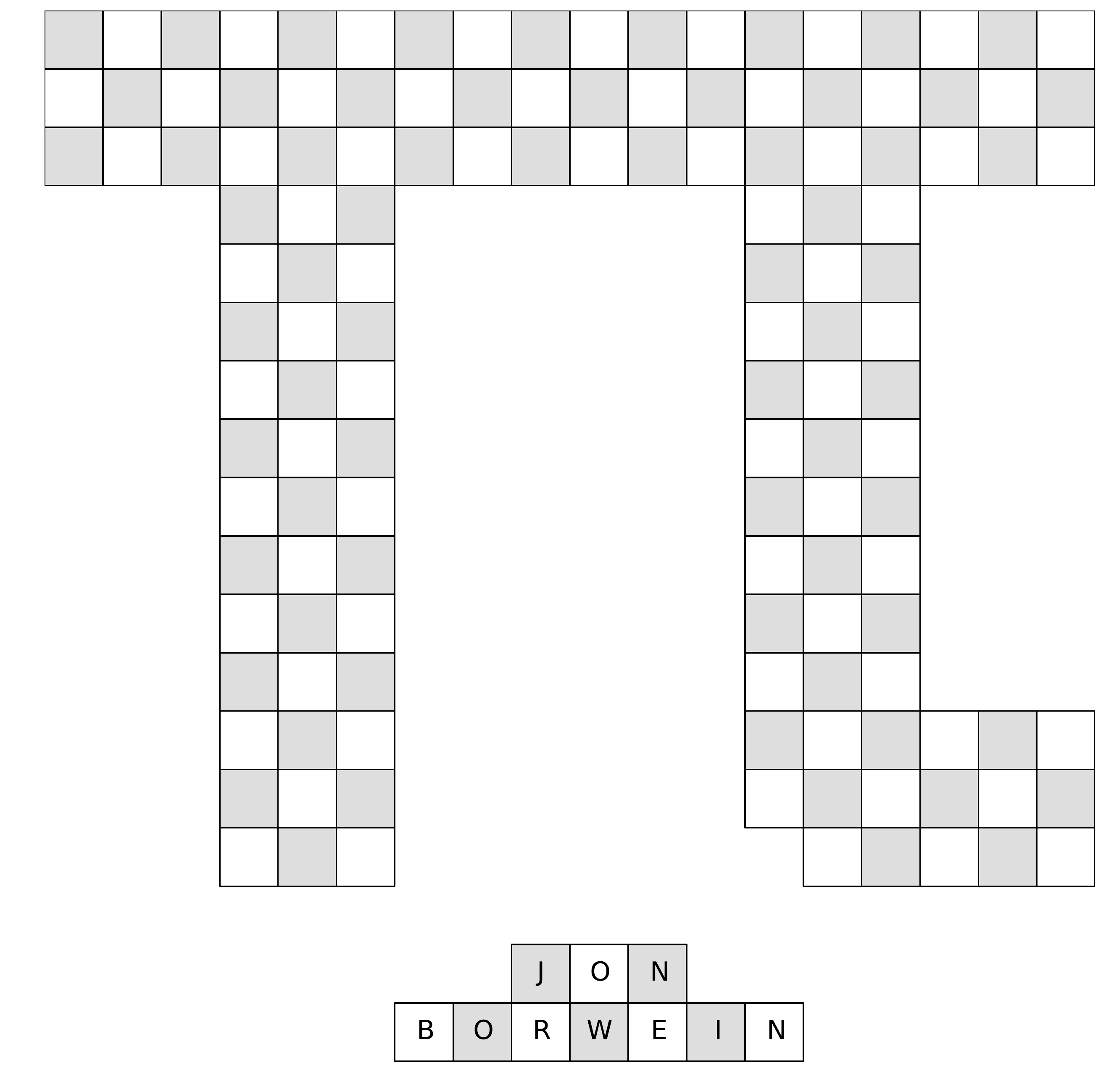}}	
	\caption{(a) A $16$-knights puzzle with $4$ colors: a solution will fill the chessboard. (b)~A 10-queens puzzle with $3$ colors played in a $9\times 9$ chessboard with a hole. (b) Empty `$\pi$-zzle'. The goal of this puzzle is to place on the board 8 times each of the 18 letters A, B, C, D, E, F, G, H, I, J, K, L, M, N, O, P, R and W. Ten cells have been prefilled. A solution to these puzzles computed with the Douglas--Rachford algorithm is shown in Figure~\ref{fig:puzzles_solved}.}\label{fig:puzzles}
\end{figure}

\section{The Hamiltonian path problem}\label{sec:hamiltonian}

A \emph{Hamiltonian path} is a path in a graph that visits every vertex exactly once. The Hamiltonian path problem consists in determining whether or not such a path exists. In this section we adapt the graph coloring scheme with the aim of using the Douglas--Rachford algorithm for finding Hamiltonian paths.

Given a graph $G$ with $n$ nodes, our objective will be to find an $n$-coloring of the graph, where each color $1, 2, \ldots, n$ will represent a position in the path. In order to ensure that the coloring represents a valid path, we will impose that two nodes assigned with two consecutive colors must be linked. Constraint $C_2$ becomes now redundant, as every node must be assigned with a different color, and it is thus no longer necessary to work in $\mathbb{R}^{(n+l)\times n}$, but in $\mathbb{R}^{n\times n}$. Hence, constraint $C_1$ becomes
$$\widetilde C_1:=\left\{X\in\R^{n\times n}:x_{ik}\in\{0,1\}, \forall (i,k)\in I\times K\text{ and } \sum_{k=1}^m x_{ik}=1, \forall i\in I\right\},$$
and the set $C_3$ must be modified and replaced by
\begin{gather*}
\widetilde C_3:=\left\{X\in\{0,1\}^{n\times n}: \forall k=1,\ldots,n-1, \exists \{i,j\}\in E \text{ s.t. } x_{i,k}x_{j,k+1}=1\right\}.
\end{gather*}
We have observed that the performance of DR is decreased if $C_2$ is removed, and that it is better to replace it by the redundant constraint $\widetilde C_2:=\mathbb{R}^{n\times n}$, see the experiment shown in Figure~\ref{fig:Exp_KnTour}. Note that constraint $C_4$ forces the path to start on node~$1$ (a path which may not even exist), so it must be eliminated.

The projection onto $\widetilde C_3$ is hard to compute because of the recurrent dependence between all the columns in the matrix $X$. To overcome this problem, we propose to split the set~$\widetilde C_3$ into two constraints, one relating each odd column with its following one, and another similar constraint for the even columns. That is, we define the constraints
\begin{gather*}
\widetilde C_{3,\text{odd}}:=\left\{X\in\{0,1\}^{(n+l)\times n}: \forall k=1,\ldots, \left\lfloor \frac{n}{2} \right\rfloor, \exists \{i,j\}\in E \text{ s.t. } x_{i,2k-1}x_{j,2k}=1\right\},\\
\widetilde C_{3,\text{even}}:=\left\{X\in\{0,1\}^{(n+l)\times n}: \forall k=1,\ldots,\left\lfloor \frac{n-1}{2} \right\rfloor, \exists \{i,j\}\in E \text{ s.t. } x_{i,2k}x_{j,2k+1}=1\right\},
\end{gather*}
which satisfy $\widetilde C_3=\widetilde C_{3,\text{odd}}\cap\widetilde C_{3,\text{even}}$, where $\lfloor \cdot \rfloor$ denotes the integer part of a number. Therefore, the solution set of the Hamiltonian path problem is $\widetilde C_1\cap \widetilde C_2\cap\widetilde C_{3,\text{odd}}\cap\widetilde C_{3,\text{even}}$.

To compute the projections onto $\widetilde C_{3,\text{odd}}$ and $\widetilde C_{3,\text{even}}$, consider the function $h:\R\mapsto \R$ defined by
\begin{equation*}
h(x):=\left\{\begin{array}{ll} x&\text{if } x\leq 0.5,\\
1&\text{if } x>0.5,\end{array}\right.
\end{equation*}
and let us denote by
$$(s_{k_1,k_2}^0,s_{k_1,k_2}^1)=\arg\!\min\left\{\left(1-h(x_{i,k_1})\right)^2+\left(1-h(x_{j,k_2})\right)^2, \{i,j\}\in E\right\},$$ where the lowest index is taken in $\arg\!\min$ to avoid multivaluedness. Then, the projections onto $\widetilde C_{3,\text{odd}}$ and $\widetilde C_{3,\text{even}}$ can be obtained as follows
\begin{gather*}
\left(P_{\widetilde C_{3,\text{odd}}}(Z)\right)[i,k]=\left\{\begin{array}{ll} 1&\text{if } i=s_{k,k+1}^0, k<n \text{ and } k \text{ is odd},\\
1&\text{if } i=s_{k-1,k}^1 \text{ and } k \text{ is even},\\
\min\left\{1,\max\{0,\round(x_{ik})\}\right\}&\text{otherwise;}\end{array}\right.\\
\left(P_{\widetilde C_{3,\text{even}}}(Z)\right)[i,k]=\left\{\begin{array}{ll} 1&\text{if } i=s_{k,k+1}^0, k<n \text{ and } k \text{ is even},\\
1&\text{if } i= s_{k-1,k}^1, 1<k \text{ and } k \text{ is odd},\\
\min\left\{1,\max\{0,\round(x_{ik})\}\right\}&\text{otherwise.}\end{array}\right.
\end{gather*}

\subsection{Hamiltonian cycles}\label{subsec:ham_cycles}

A \emph{Hamiltonian cycle} is a Hamiltonian path that is also a cycle, that is, there is a link connecting the last node in the path and the first one. The problem of finding such a cycle can be cast as a Hamiltonian path problem as we show next.

Given a graph $G=(V,E)$, select any node $v\in V$ and make a copy of it, i.e., create a new node $v'$ that is connected with all nodes linked to $v$. Then, create another two new nodes $t$ and $s$, and link $t$ with $v$ and $s$ with $v'$ (see Figure~\ref{fig:hamiltoniancycle}).
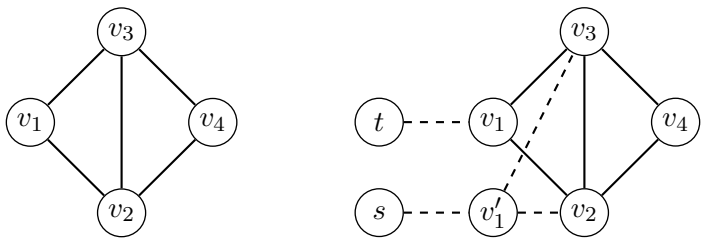
\begin{figure}[ht!]
	\centering
	\begin{tikzpicture}[scale=.6]%
	\GraphInit[vstyle=Normal]
	\SetVertexNoLabel
	\tikzstyle{EdgeStyle}=[thick]
	\Vertex[x=-2, y=2] {c0}
	\Vertex[x=0, y=0] {c1}
	\Vertex[x=0, y=4] {c2}
	\Vertex[x=2, y=2] {c3}
	\AssignVertexLabel{c}{$v_1$,$v_2$,$v_3$,$v_4$}
	\Edges(c0,c1,c3,c2,c0)
	\Edges(c1,c2)
	\end{tikzpicture}\qquad\qquad
	\begin{tikzpicture}[scale=.6]%
	\GraphInit[vstyle=Normal]
	\SetVertexNoLabel
	\tikzstyle{EdgeStyle}=[thick]
	\Vertex[x=-2, y=2] {c0}
	\Vertex[x=0, y=0] {c1}
	\Vertex[x=0, y=4] {c2}
	\Vertex[x=2, y=2] {c3}
	\Vertex[x=-2, y=0] {c4}
	\Vertex[x=-4.5, y=0] {s0}
	\Vertex[x=-4.5, y=2] {s1}
	\AssignVertexLabel{c}{$v_1$,$v_2$,$v_3$,$v_4$,$v'_1$}
	\AssignVertexLabel{s}{$s$,$t$}
	\Edges(c0,c1,c3,c2,c0)
	\Edges(c1,c2)
	\tikzstyle{EdgeStyle}=[dashed]
	\Edges(c4,c2)
	\Edges(c4,c1)
	\Edges(s0,c4)
	\Edges(s1,c0)
	\end{tikzpicture}
	\caption{Hamiltonian cycle reduced to Hamiltonian path.}\label{fig:hamiltoniancycle}
\end{figure}

Since $t$ and $s$ have \emph{degree one} (i.e., they are only linked with another node), every admissible Hamiltonian path in the new graph needs to start in one of these nodes and finish in the other. Thus, after removing $t$ and $s$, we end up with a path going from $v$ to $v'$. As these nodes were originally the same, we have actually found a Hamiltonian cycle.

\begin{example}
An example of Hamiltonian path/cycle arises in the \emph{knight's tour problem}. The \emph{knight's path problem} consists in finding a sequence of moves of a knight on a chessboard such that it visits exactly once every square. If the final position of such a path is one knight's move away from the starting position of the knight, the path is called a \emph{knight's cycle}. Thus, to find a knight's cycle, one only needs to build the graph corresponding to the knight's movements on a chessboard, and find a Hamiltonian cycle in the graph. A solution for a $12\times 12$ chessboard computed with DR is shown in Figure~\ref{fig:Knights_Tour}.
\begin{figure}[ht!]
	\centering
	\includegraphics[width=0.5\linewidth]{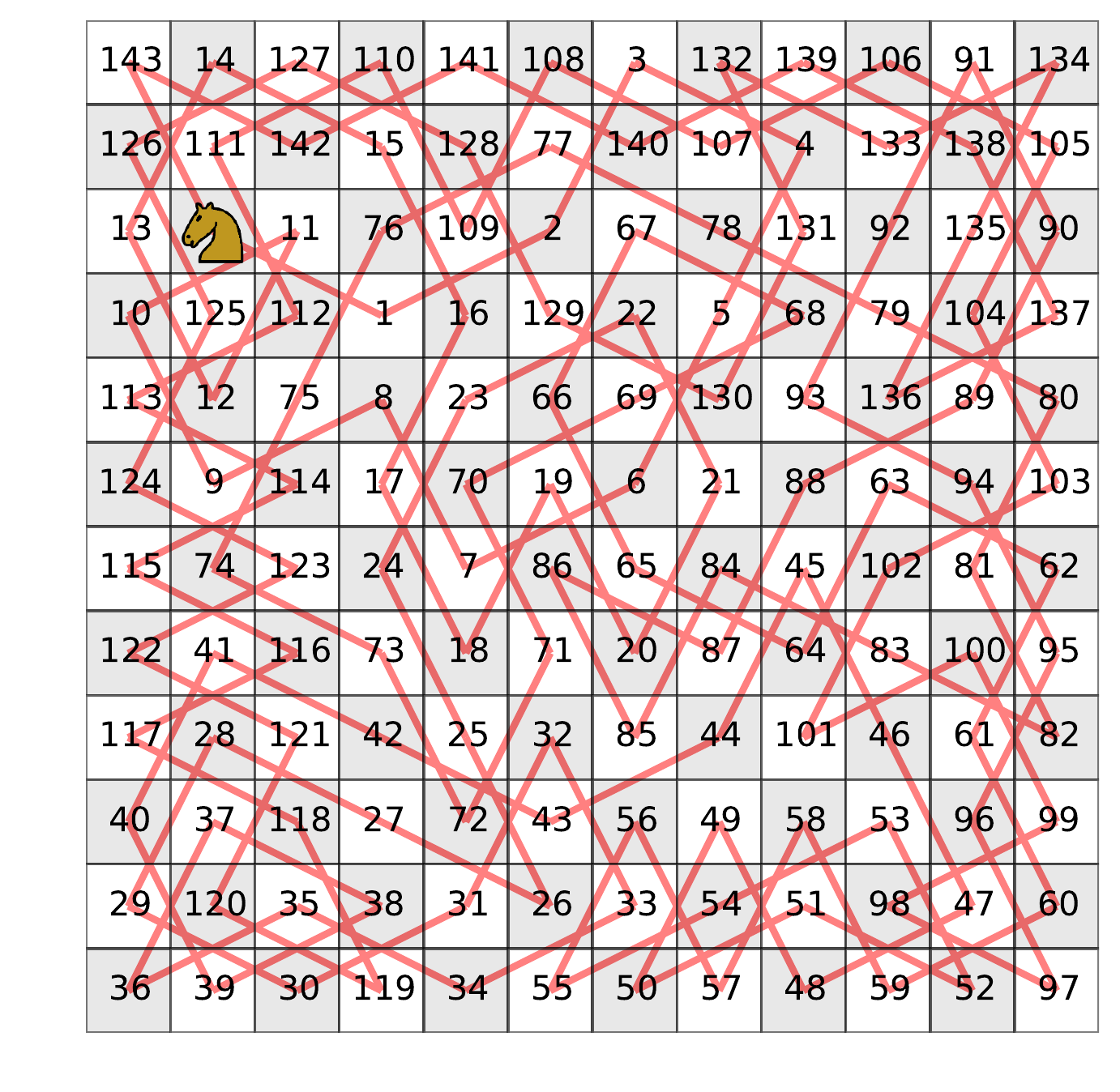}
	\caption{A knight's cycle on a $12\times12$ chessboard computed with DR. For 10~random starting
points, the method found a solution for every instance, with an average (maximum) time of 1,397 seconds (3,301 seconds, respectively).}\label{fig:Knights_Tour}
\end{figure}
\end{example}

\section{Numerical experiments}\label{sec:numerical}

In this section we test the performance of the Douglas--Rachford algorithm for solving a representative sample of the graph coloring problems previously presented. All codes are written in Python~2.7 and the tests were run on an Intel Core i7-4770 CPU \@3.40GHz with 12GB RAM, under Windows 10 (64-bits).

We begin our tests with one of the most well-known graphs: Petersen graph (see Figure~\ref{fig:Petersen}). This graph has 10 vertices, 15 edges and can be 3-colored in 120 different ways.

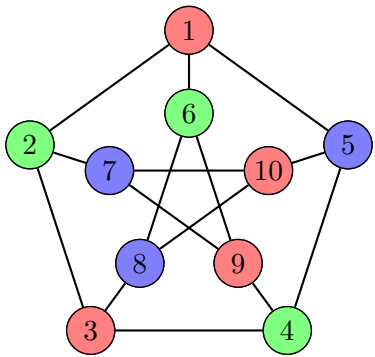
\begin{figure}[ht!]
\centering
\begin{tikzpicture}[scale=.55]%
    \GraphInit[vstyle=Normal]
    \SetVertexNoLabel
    \grPetersen[rotation=90,prefix=a,form=1,RA=4,RB=2]{};
    \AddVertexColor{red!50}{a0,a2,b3,b4}
    \AddVertexColor{green!50}{a1,a3,b0}
    \AddVertexColor{blue!50}{b1,b2,a4}
    \AssignVertexLabel{a}{1,...,5};
    \AssignVertexLabel{b}{6,...,10};
\end{tikzpicture}
\caption{A 3-coloring of Petersen graph.}\label{fig:Petersen}
\end{figure}

The results of our first experiment are shown in Figure~\ref{fig:Exp_Petersen}. For 100,000 random starting points and using formulation~\eqref{eq:formulation_1}, we report the number of iterations needed by the Douglas--Rachford algorithm until it obtained a solution. The success rate was 100\% in this experiment: for every starting point, the algorithm was able to find a solution.

\begin{figure}[ht!]
	\centering
\begin{minipage}{0.52\textwidth}
\centering
	\includegraphics[width=\linewidth]{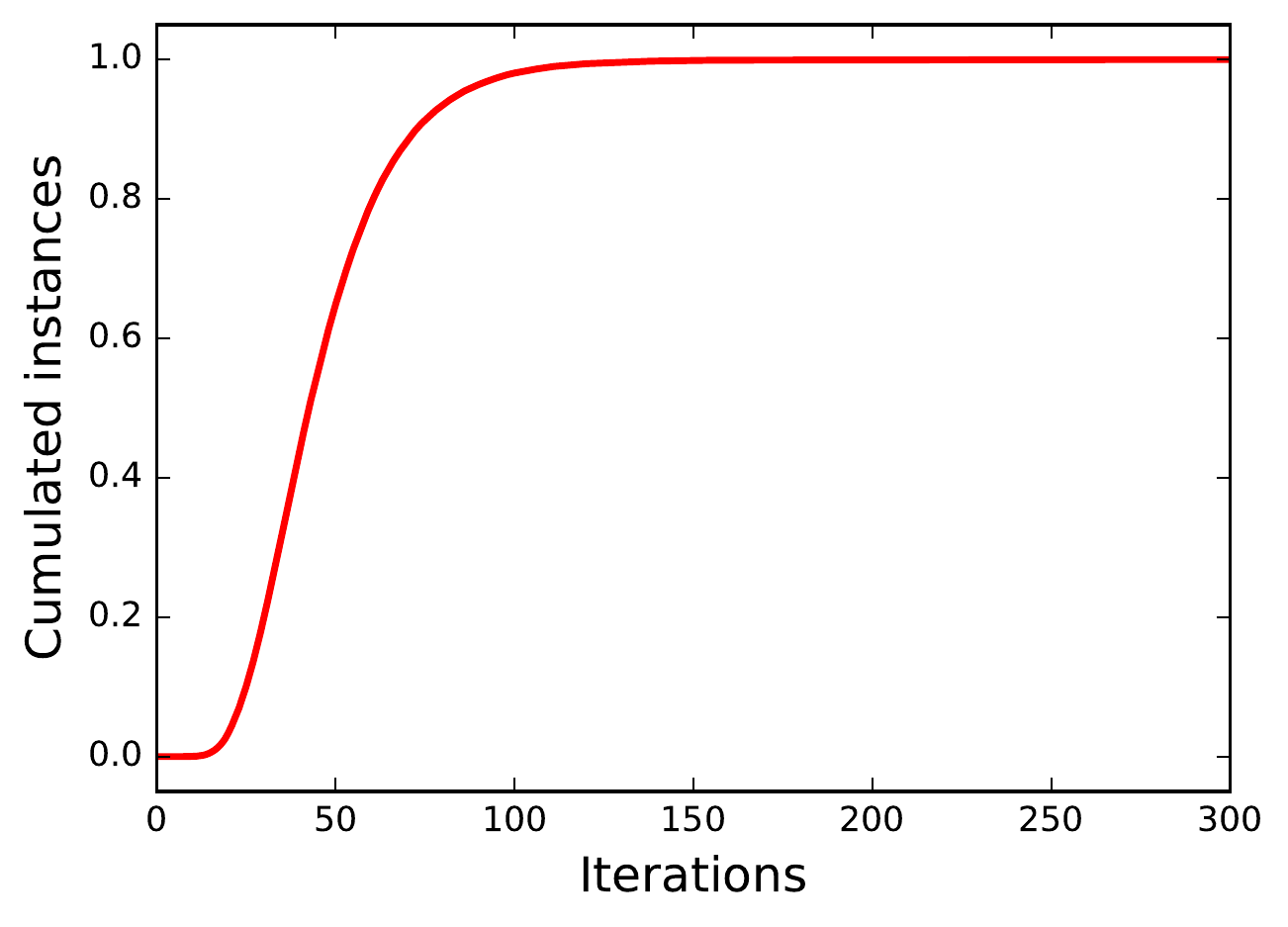}
\end{minipage}
\begin{minipage}{0.47\textwidth}
\centering

\begin{tabular}{|c||c|c|}
\hhline{-||--}
Iterations & Instances & Cumulated \tabularnewline
\hhline{=::==}
0-74 & 90,845 & 90.84\% \tabularnewline
75-149 & 9,037 & 99.88\% \tabularnewline
150-224 & 112 & 99.99\% \tabularnewline
225-299 & 6 & 100\% \tabularnewline
\hhline{-||--}
Unsolved & 0 & 100\% \tabularnewline
\hhline{-||--}
\end{tabular}
\end{minipage}

	\caption{Number of iterations spent by DR to find a solution of a 3-coloring of Petersen graph for 100,000 random starting points. On average, each solution was found in 0.00567 seconds.}
	\label{fig:Exp_Petersen}
\end{figure}

In our second experiment, we tested the performance of the Douglas--Rachford algorithm with formulation~\eqref{eq:formulation_1} for finding a valid coloring of complete graphs with 4, 5 and 6 nodes. A complete graph with $n$ vertices has $n(n-1)/2$ edges and can be $n$-colored in $n!$ different ways.  The algorithm was stopped after 500 iterations. DR was able to find a solution for every random starting point for the graphs of 5 and 6 nodes, while it failed in 0.16\% of the starting points for the complete graph of 4 nodes. The results are shown in Figure~\ref{fig:Exp_Complete456}.

\begin{figure}[ht!]
	\centering
	\includegraphics[width=0.65\linewidth]{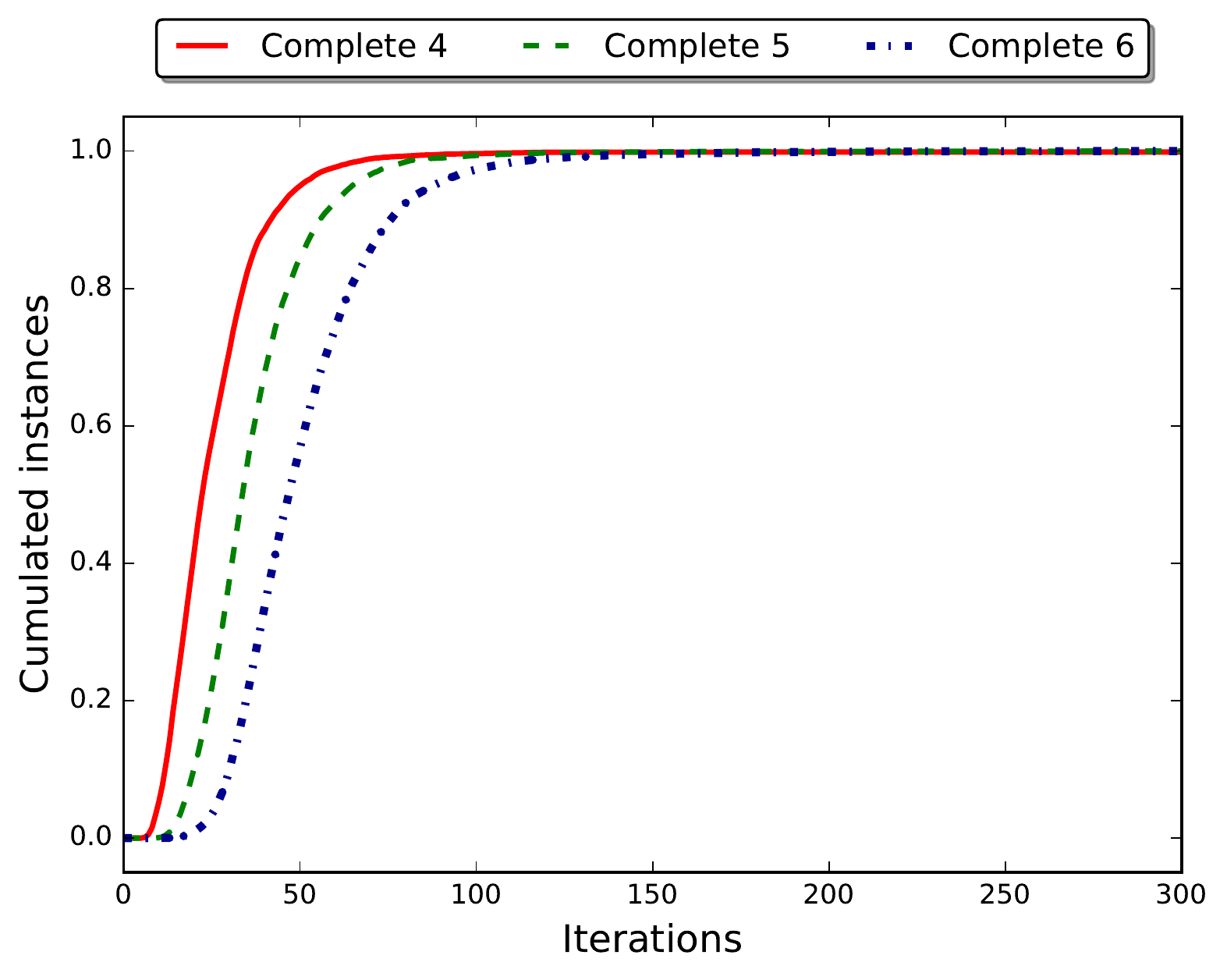}\vspace{5pt}
	
\centering

\begin{tabular}{|c||c|c||c|c||c|c|}
\cline{2-7}
\multicolumn{1}{c|}{} & \multicolumn{2}{c||}{Complete 4} & \multicolumn{2}{c||}{Complete 5} & \multicolumn{2}{c|}{Complete 6} \tabularnewline
\hhline{-||--||--||--}
Iterations & Instances & Cumul. & Instances & Cumul. & Instances & Cumul.\tabularnewline
\hhline{=::======}
0-99 & 9,961 & 99.61\% & 9,934 & 99.34\% & 9,718 & 97.18\% \tabularnewline
100-199 & 22 & 99.83\% & 61 & 99.95\% & 272 & 99.9\% \tabularnewline
200-299 & 1 & 99.84\% & 5 & 100\% & 10 & 100\% \tabularnewline
300-499 & 0 & 99.84\% & 0 & 100\% & 0 & 100\% \tabularnewline
\hhline{-||--||--||--}
Unsolved & 16 & 100\% & 0 & 100\% & 0 & 100\% \tabularnewline
\hhline{-||------}
\end{tabular}
\caption{Number of iterations spent by DR to find a solution of an $n$-coloring of a complete graph with $n$ vertices for 10,000 random starting points, with $n=4,5,6$. Each solution was found, on average, in 0.00281 seconds for $n=4$, 0.00429 seconds for $n=5$, and 0.00642 seconds for $n=6$. Instances were labeled as ``Unsolved'' after 500 iterations.}\label{fig:Exp_Complete456}
\end{figure}

We also tested the performance of the Douglas--Rachford algorithm on two wheel graphs of 5 and 6 nodes (see Figure~\ref{fig:wheel}). The results are shown in Figure~\ref{fig:Exp_Wheel56}. A wheel graph with $n$ vertices has $2(n-1)$ edges. If $n$ is even, it can be 4-colored in $4(2^{n-1}-2)$ different ways; if $n$ is odd, it can be 3-colored in 6 different ways.

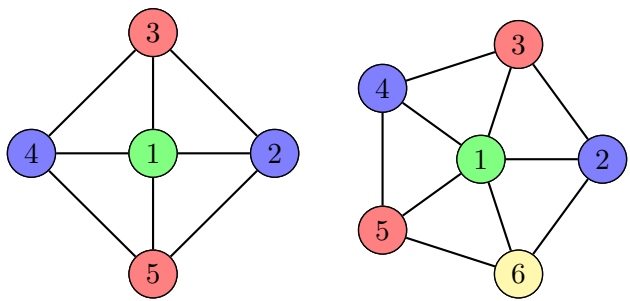
\begin{figure}[ht!]
\centering
\begin{tikzpicture}[scale=.4]%
    \GraphInit[vstyle=Normal]
    \SetVertexNoLabel
    \grWheel[prefix=a,form=1,RA=4,RB=2]{5};

    \AddVertexColor{red!50}{a1,a3}
    \AddVertexColor{green!50}{a4}
    \AddVertexColor{blue!50}{a2,a0}
    \AssignVertexLabel{a}{2,...,5,1};
\end{tikzpicture}\qquad
\begin{tikzpicture}[scale=.4]%
    \GraphInit[vstyle=Normal]
    \SetVertexNoLabel
    \grWheel[rotation=0,prefix=b,RA=4]{6};

    \AddVertexColor{red!50}{b1,b3}
    \AddVertexColor{green!50}{b5}
    \AddVertexColor{blue!50}{b2,b0}
    \AddVertexColor{yellow!40}{b4}
    \AssignVertexLabel{b}{2,...,6,1};
\end{tikzpicture}
\caption{A 3-coloring and a 4-coloring of two wheel graphs of 5 and 6 nodes.}\label{fig:wheel}
\end{figure}

\begin{figure}[ht!]
\begin{minipage}{0.46\textwidth}
	\centering
	\includegraphics[width=\linewidth]{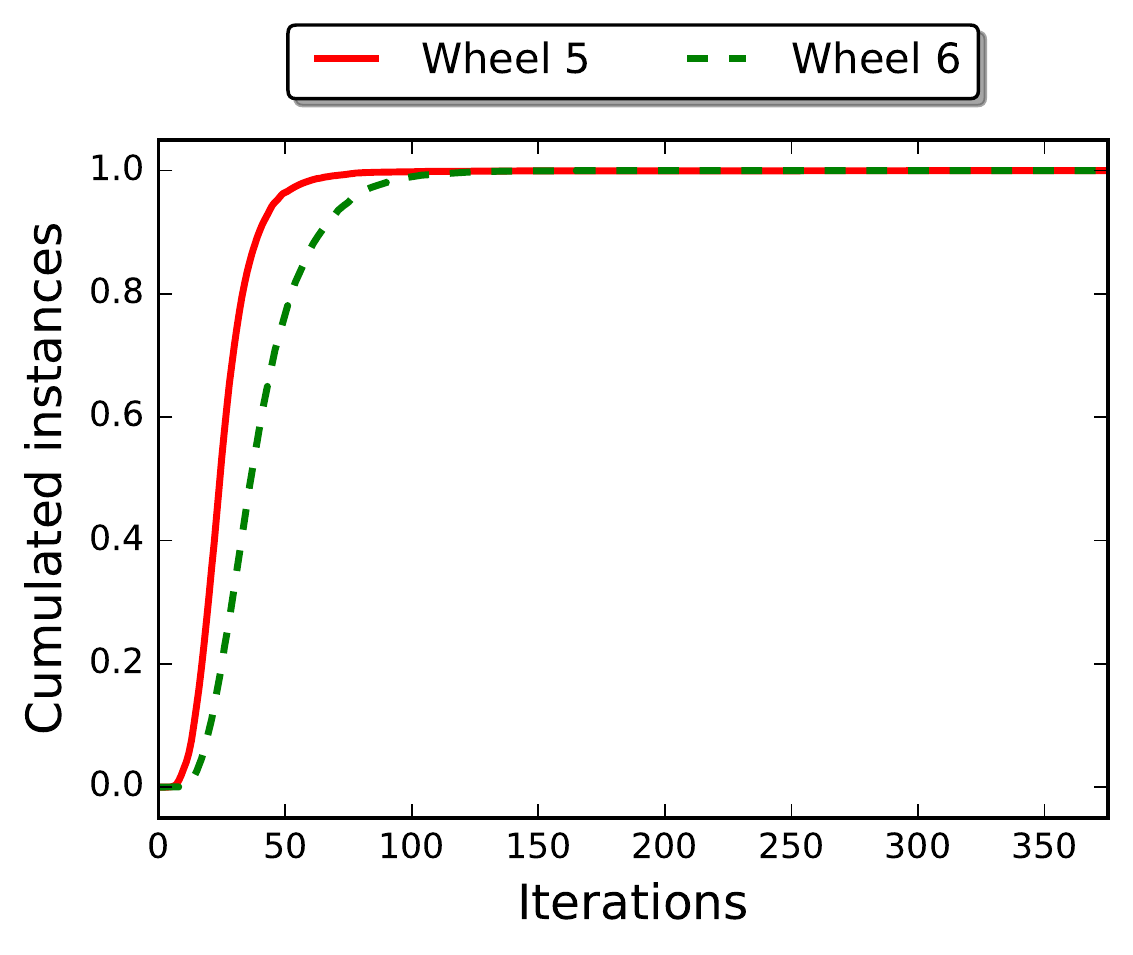}
\end{minipage}
\begin{minipage}{.53\textwidth}
\centering\small
\begin{tabular}{|c||c|c||c|c|}
\cline{2-5}
\multicolumn{1}{c|}{} & \multicolumn{2}{c||}{3-coloring} & \multicolumn{2}{c|}{4-coloring} \tabularnewline
\multicolumn{1}{c|}{} & \multicolumn{2}{c||}{of wheel 5} & \multicolumn{2}{c|}{of wheel 6} \tabularnewline
\hhline{-||--||--}
\!Iterations\! & Instan. & Cumul. & Instan. & Cumul. \tabularnewline
\hhline{=::====}
0-74 & 9,938 & 99.38\% & 9,455 & 94.55\% \tabularnewline
75-149 & 57 & 99.95\% & 541 & 99.96\% \tabularnewline
150-224 & 1 & 99.96\% & 4 & 100.0\% \tabularnewline
225-299 & 2 & 99.98\% & 0 & 100.0\% \tabularnewline
300-374 & 1 & 99.99\% & 0 & 100.0\% \tabularnewline
375-499 & 0 & 99.99\% & 0 & 100.0\% \tabularnewline
\hhline{-||--||--}
Unsolved & 1 & 100\% & 0 & 100\% \tabularnewline
\hhline{-||----}
\end{tabular}
\end{minipage}
\caption{Number of iterations spent by DR to find a solution of two wheel graphs for 10,000 random starting points. Each solution was found, on average, in 0.00291 seconds for wheel 5, and 0.00463 seconds for wheel 6. Instances were labeled as ``Unsolved'' after 500 iterations.}\label{fig:Exp_Wheel56}
\end{figure}

We repeated the same experiment with three cycle graphs (consisting in a given number of vertices connected in a closed chain). A cycle graph with $n$ vertices has $n$ edges. If $n$ is even, it can be 2-colored in 2 different ways; if $n$ is odd, it can be 3-colored in $2^n-2$ different ways. The results are shown in Figure~\ref{fig:Exp_Cycles101520}.

\begin{figure}[ht!]
	\centering
	\includegraphics[width=0.7\linewidth]{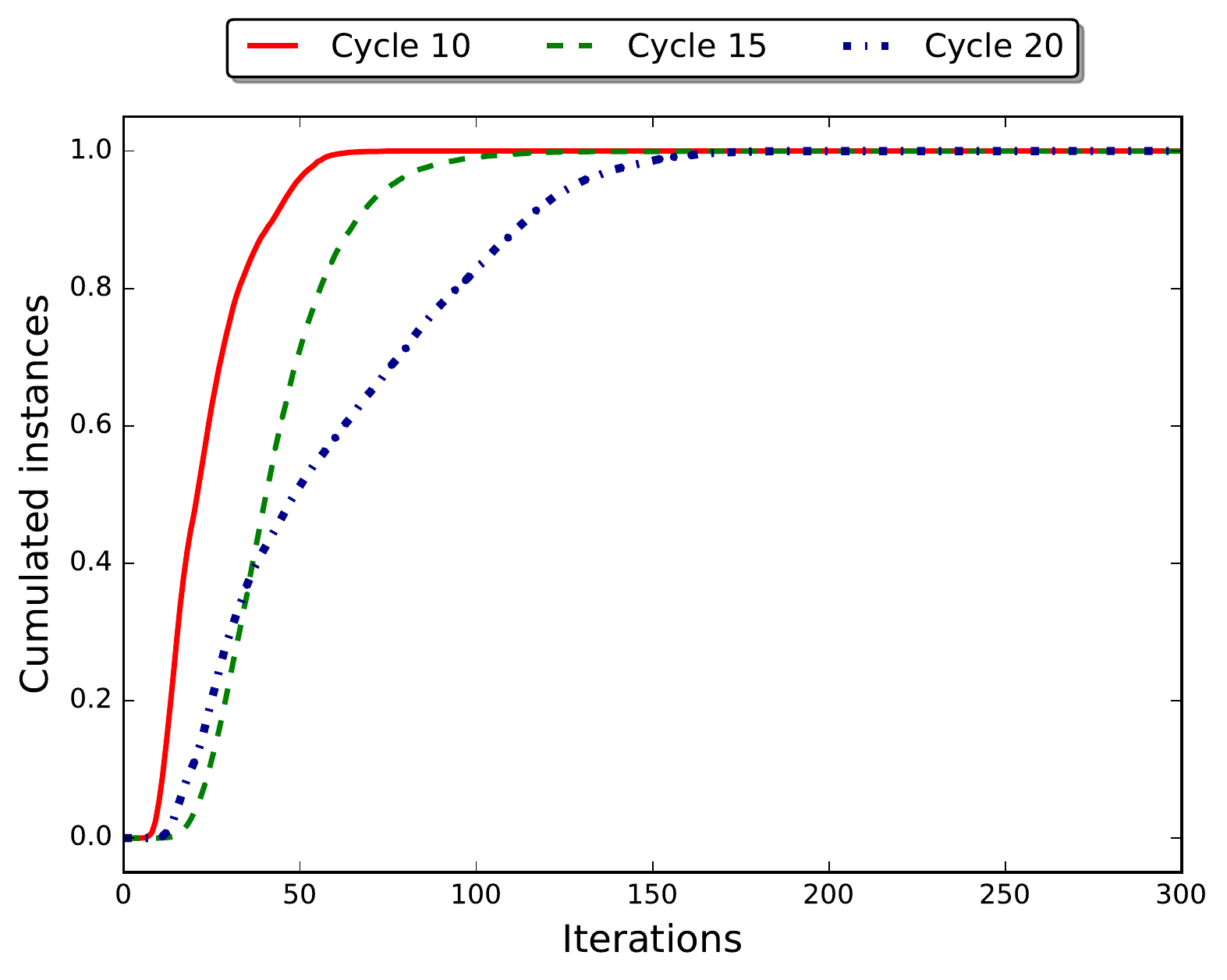}
\vspace{5pt}
	
\centering
\begin{tabular}{|c||c|c||c|c||c|c|}
\cline{2-7}
\multicolumn{1}{c|}{} & \multicolumn{2}{c||}{2-coloring} & \multicolumn{2}{c||}{3-coloring} & \multicolumn{2}{c|}{2-coloring} \tabularnewline
\multicolumn{1}{c|}{} & \multicolumn{2}{c||}{of cycle 10} & \multicolumn{2}{c||}{of cycle 15} & \multicolumn{2}{c|}{of cycle 20} \tabularnewline
\hhline{-||--||--||--}
Iterations & Instances & Cumul.  & Instances & Cumul.  & Instances & Cumul. \tabularnewline
\hhline{=::======}
0-99 & 10,000 & 100.0\% & 9,903 & 99.03\% & 8,251 & 82.51\% \tabularnewline
100-199 & 0 & 100.0\% & 94 & 99.97\% & 1,748 & 99.99\% \tabularnewline
200-299 & 0 & 100.0\% & 0 & 99.97\% & 1 & 100.0\% \tabularnewline
300-499 & 0 & 100.0\% & 0 & 99.97\% & 0 & 100.0\% \tabularnewline
\hhline{-||--||--||--}
Unsolved & 0 & 100\% & 3 & 100\% & 0 & 100\% \tabularnewline
\hhline{-||------}
\end{tabular}
\caption{Number of iterations spent by DR to find a solution of three cycle graphs for 10,000 random starting points. Each solution was found, on average, in 0.00277 seconds for cycle 10, 0.00568 seconds for cycle 15, and 0.00794 seconds for cycle 20. Instances were labeled as ``Unsolved'' after 500 iterations.}\label{fig:Exp_Cycles101520}
\end{figure}

In our following experiment, whose results are shown in Figure~\ref{fig:Exp_WindmillCliques}, we compare the performance of the Douglas--Rachford algorithm with and without maximal clique information when it is applied for finding a solution of the windmill graph $\Wd(10,5)$. Observe that, even having increased the number of variables in the feasibility problem, both the rate of success and the rate of convergence (in terms of iterations, but also computing time) are improved.

\begin{figure}[ht!]
	\centering
	\includegraphics[width=0.65\linewidth]{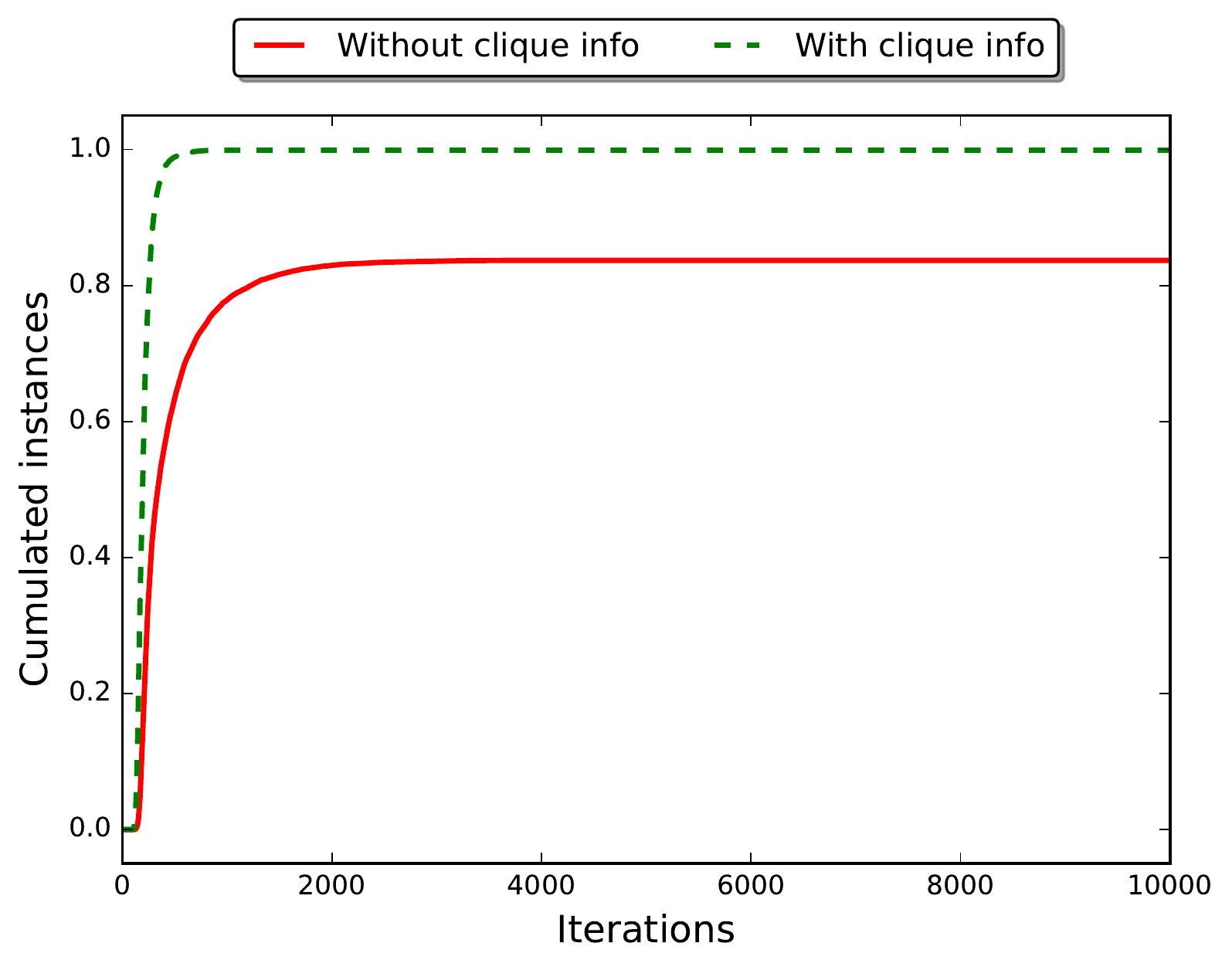}\vspace{5pt}

\centering
\begin{tabular}{|c||c|c||c|c|}
\cline{2-5}
\multicolumn{1}{c|}{} & \multicolumn{2}{c||}{Without maximal} & \multicolumn{2}{c|}{With maximal} \tabularnewline
\multicolumn{1}{c|}{} & \multicolumn{2}{c||}{clique information} & \multicolumn{2}{c|}{clique information} \tabularnewline
\hhline{-||--||--}
Iterations & Instances & Cumul. & Instances & Cumul. \tabularnewline
\hhline{=::====}
0-499 & 6,338 & 63.38\% & 9,887 & 98.87\% \tabularnewline
500-999 & 1,449 & 77.87\% & 101 & 99.88\% \tabularnewline
1,000-1,499 & 375 & 81.62\% & 1 & 99.89\% \tabularnewline
1,500-1,999 & 134 & 82.96\% & 0 & 99.89\% \tabularnewline
2,000-2,499 & 42 & 83.38\% & 0 & 99.89\% \tabularnewline
2,500-2,999 & 18 & 83.56\% & 0 & 99.89\% \tabularnewline
3,000-3,499 & 11 & 83.67\% & 0 & 99.89\% \tabularnewline
3,500-3,999 & 2 & 83.69\% & 0 & 99.89\% \tabularnewline
4,000-4,499 & 1 & 83.7\% & 0 & 99.89\% \tabularnewline
4,500-9,999 & 0 & 83.7\% & 0 & 99.89\% \tabularnewline
\hhline{-||--||--}
Unsolved & 1,630 & 100\% & 11 & 100\% \tabularnewline
\hhline{-||----}
\end{tabular}
\caption{Comparison of the number of iterations spent by DR to find a solution of the windmill graph $\Wd(10,5)$ for 10,000 random starting points. Complete maximal clique information was used in the right columns. Each solution was found, on average, in 0.1673 seconds without clique information, and 0.08484 seconds with maximal clique information. Instances were labeled as ``Unsolved'' after 10,000 iterations.}\label{fig:Exp_WindmillCliques}
\end{figure}

If $\|Z_k\|$ increases as $k$ increases, the Douglas--Rachford algorithm may serve to detect infeasibility of the corresponding coloring problem, see Figure~\ref{fig:infeasible}(a)-(b).  This is not always the case, as shown in Figure~\ref{fig:infeasible}(c)-(d). Interestingly, when we removed the extra constraints~\eqref{eq:C5} and~\eqref{eq:C4}, which is something that does not change the feasibility of the problems, the algorithm was not able to detect any infeasible problem.

\begin{figure}[ht!]
\centering
	 \subfigure[3-coloring of Petersen graph (feasible).]{\includegraphics[width=0.43\textwidth]{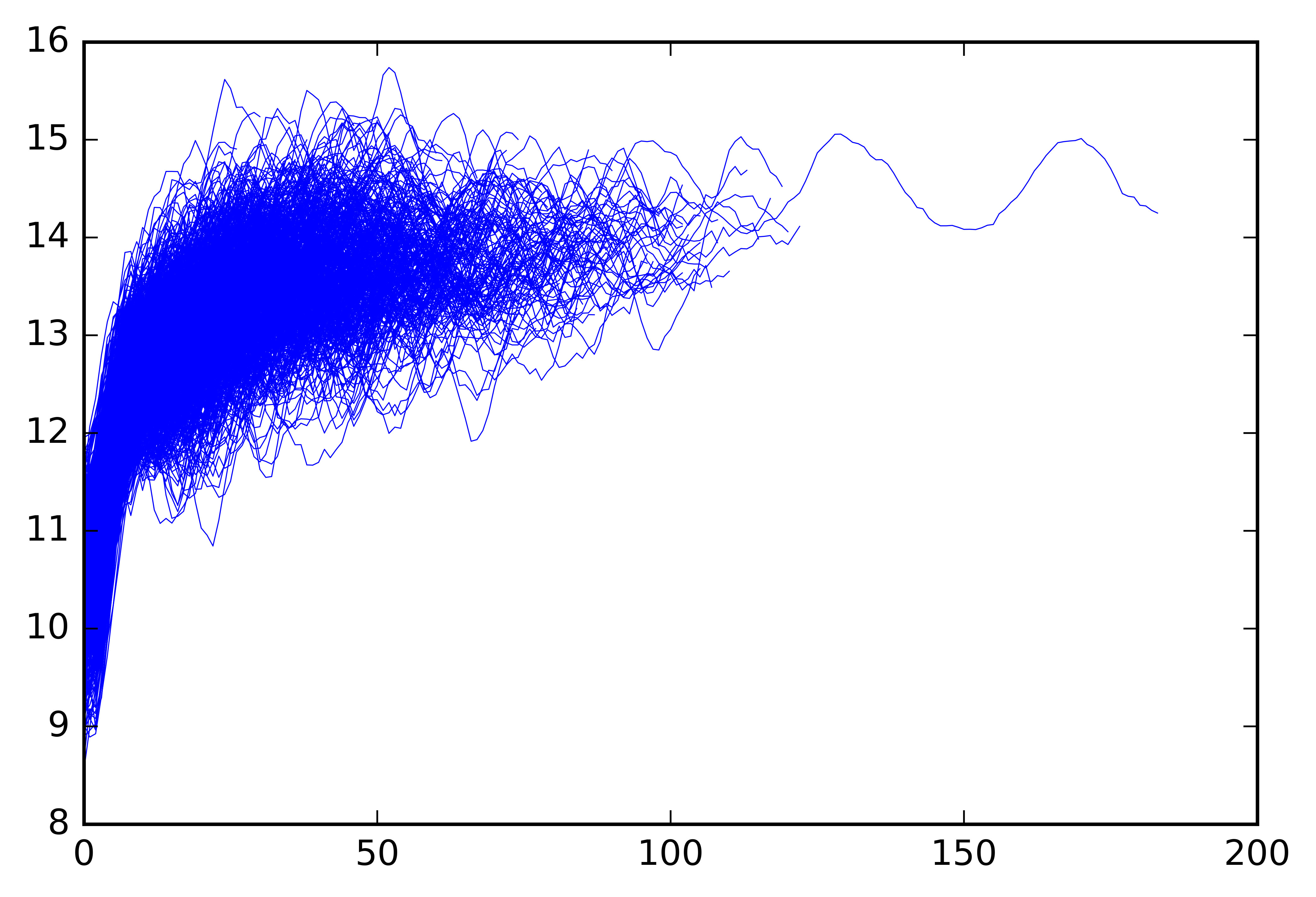}}\hspace{0.05\textwidth}
	 \subfigure[2-coloring of Petersen graph (infeasible).]{\includegraphics[width=0.43\textwidth]{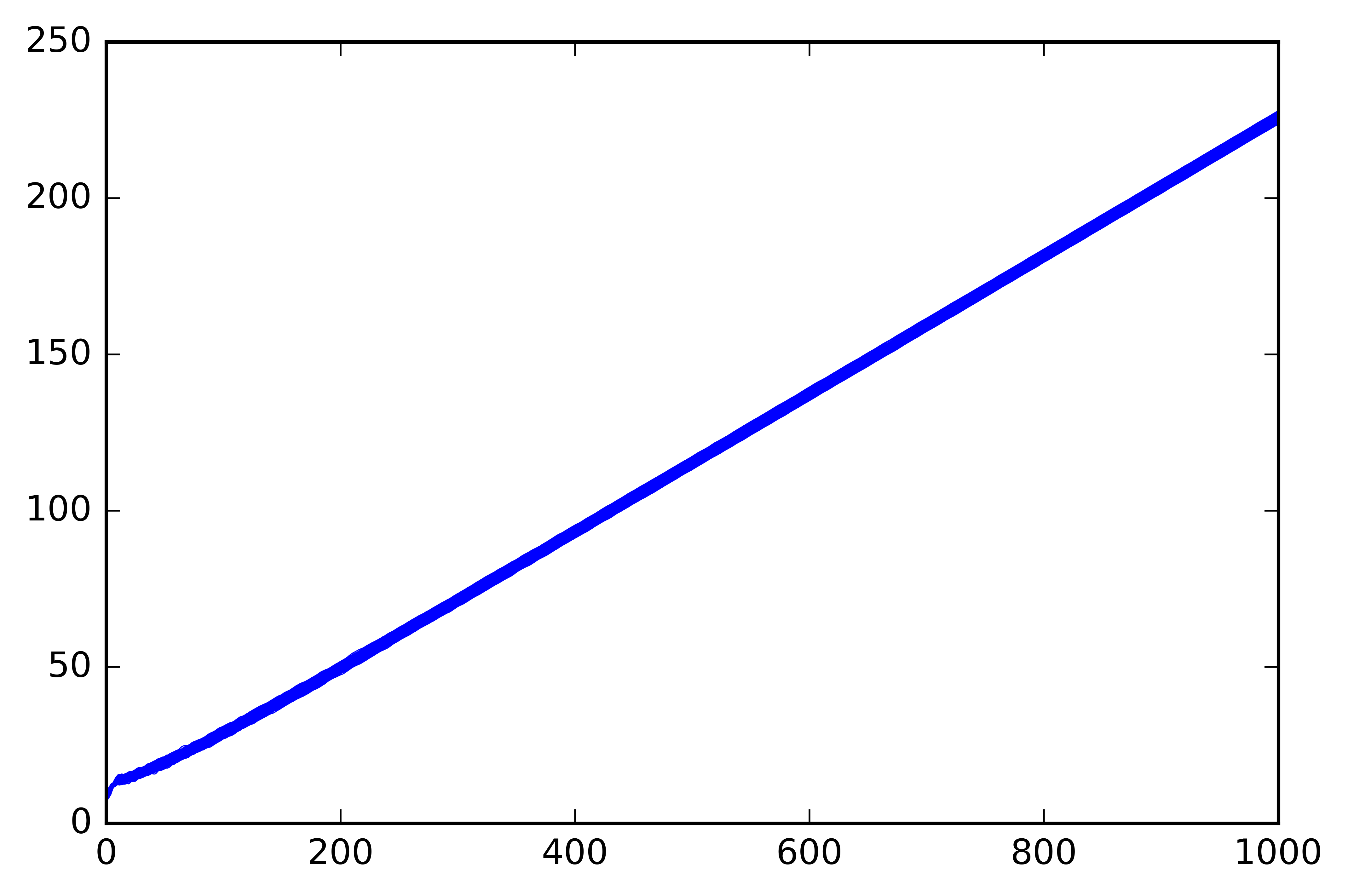}}

	 \subfigure[3-coloring of a 6-wheel graph (infeasible).]{\includegraphics[width=0.43\textwidth]{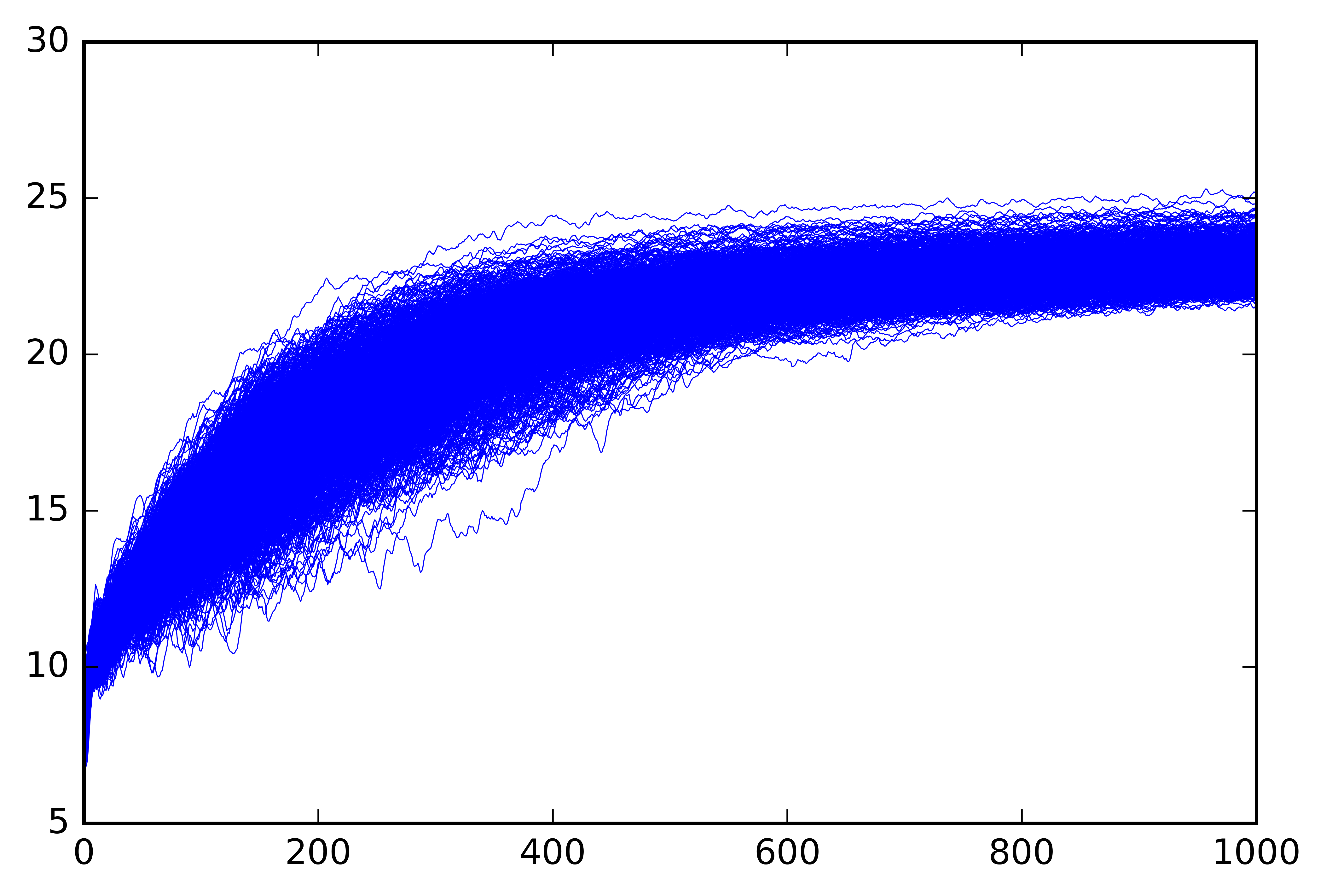}}\hspace{0.05\textwidth}
	 \subfigure[2-coloring of a 6-wheel graph (infeasible).]{\includegraphics[width=0.43\textwidth]{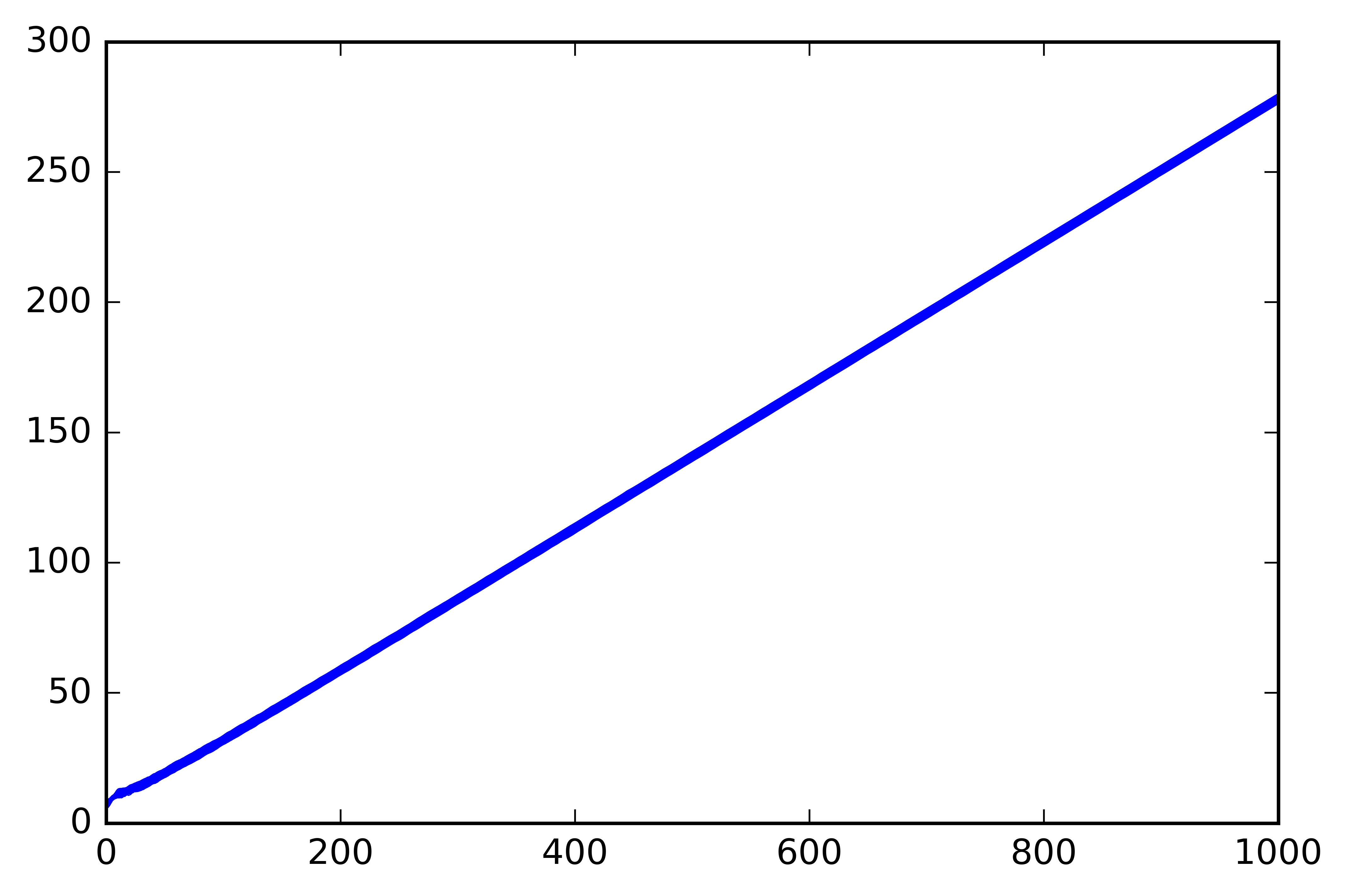}}
    \caption{For 1,000 random starting points, we represent the iteration $k$ in the horizontal axis and $\|Z_k\|$ in the vertical axis for 1,000 iterations of the Douglas--Rachford algorithm.}\label{fig:infeasible}
\end{figure}

Next, we tested the performance of DR for the 4-nodes and the 5-nodes formulations for the first 50 3-SAT problems with 20 variables and 91 clauses in \texttt{SATLIB}\footnote{\texttt{SATLIB}: \url{www.cs.ubc.ca/~hoos/SATLIB/Benchmarks/SAT/RND3SAT/uf20-91.tar.gz}}.
For each of the formulations, we run the Douglas--Rachford algorithm with and without maximal clique information for 10 random starting points. The results are shown in Table~\ref{tbl:3-SAT}. Clearly, the addition of the maximal clique information turns out to be crucial for the success of the Douglas--Rachford algorithm, specially for the 5-nodes formulation.

\begin{table}[ht!]
\centering
\begin{tabular}{|c||c|c||c|c||c|c||c|c|}
\cline{2-9}
\multicolumn{1}{c|}{} & \multicolumn{2}{c||}{4-nodes without} & \multicolumn{2}{c||}{4-nodes with}  & \multicolumn{2}{c||}{5-nodes without} & \multicolumn{2}{c|}{5-nodes with} \tabularnewline
\multicolumn{1}{c|}{} & \multicolumn{2}{c||}{clique info.} & \multicolumn{2}{c||}{clique info.} & \multicolumn{2}{c||}{clique info.} & \multicolumn{2}{c|}{clique info.}\tabularnewline
\hhline{-||--||--||--||--}
Time & Inst. & Cumul.& Inst. & Cumul. & Inst. & Cumul. & Inst. & Cumul. \tabularnewline
\hhline{=::========}
0-49 & 246 & 49\% & 341 & 68\% & 0 & 0\% & 295 & 59\% \tabularnewline
50-99 & 76 & 64\% & 69 & 82\% & 0 & 0\% & 77 & 74\% \tabularnewline
100-149 & 38 & 72\% & 20 & 86\% & 0 & 0\% & 22 & 78\% \tabularnewline
150-199 & 14 & 74\% & 19 & 89\% & 0 & 0\% & 9 & 80\% \tabularnewline
200-249 & 13 & 77\% & 7 & 91\% & 0 & 0\% & 5 & 81\% \tabularnewline
250-299 & 7 & 78\% & 7 & 92\% & 0 & 0\% & 4 & 82\% \tabularnewline
\hhline{-||--||--||--||--}
Unsolved & 106 & 100\% & 37 & 100\% & 500 & 100\% & 88 & 100\% \tabularnewline
\hhline{-||--------}
\end{tabular}
\caption{Time spent (in seconds) by DR to find a solution of 50 different 3-SAT problems with 20 variables and 91 clauses. For each problem, 10 random starting points were chosen. After 5 minutes without finding a solution, instances where labeled as ``Unsolved''. Two formulations of the gadgets were considered, with 4 and 5 nodes.}\label{tbl:3-SAT}
\end{table}

For an appropriate visualization of the results and comparison of the formulations, we turn to performance profiles (see~\cite{DM02}). We use the modification proposed in~\cite{ISU16}, since it suits better our experiment, where we have multiple runs for every formulation and problem. Let $\Phi$ denote the (finite) set of all formulations. For each formulation $f\in\Phi$, let $t_{f,p}$ be the average time required by DR to solve problem $p$ among all the successful runs, and let us denote by $s_{f,p}$ the portion of successful runs for problem $p$. Compute $t^\star_p:=\min_{f\in\Phi} t_{f,p}$ for all $p\in\{1,\ldots,n_p\}$, where $n_p$ is the number of problems in the experiment. Then, for any $\tau\geq 1$, define $R_f(\tau):=\{p\in\{1,\ldots,n_p\}, t_{f,p}\leq\tau t^\star_p\}$; that is, $R_f(\tau)$ is the set of problems for which formulation $f$ is at most $\tau$ times slower than the best one. The \emph{performance profile} function of formulation $f$ is given by
\begin{equation*}
\begin{array}{rccl}
\pi_f: & [1,+\infty) & \longmapsto & [0,1]\\
& \tau &  \mapsto & \pi_f(\tau):=\frac{1}{n_p}\sum_{p\in R_f(\tau)} s_{f,p}.
\end{array}
\end{equation*}

The value $\pi_f(1)$ indicates the portion of runs for which $f$ was the fastest formulation. When $\tau\rightarrow+\infty$, then $\pi_f(\tau)$ shows the portion of successful runs for formulation $f$. Performance profiles for the $3$-SAT experiment from Table~\ref{tbl:3-SAT} are displayed in Figure~\ref{fig:3SAT_PProfile}. It clearly shows that the 4-nodes formulation with clique information is the best one.

\begin{figure}[ht!]
	\centering
	\includegraphics[width=0.66\linewidth]{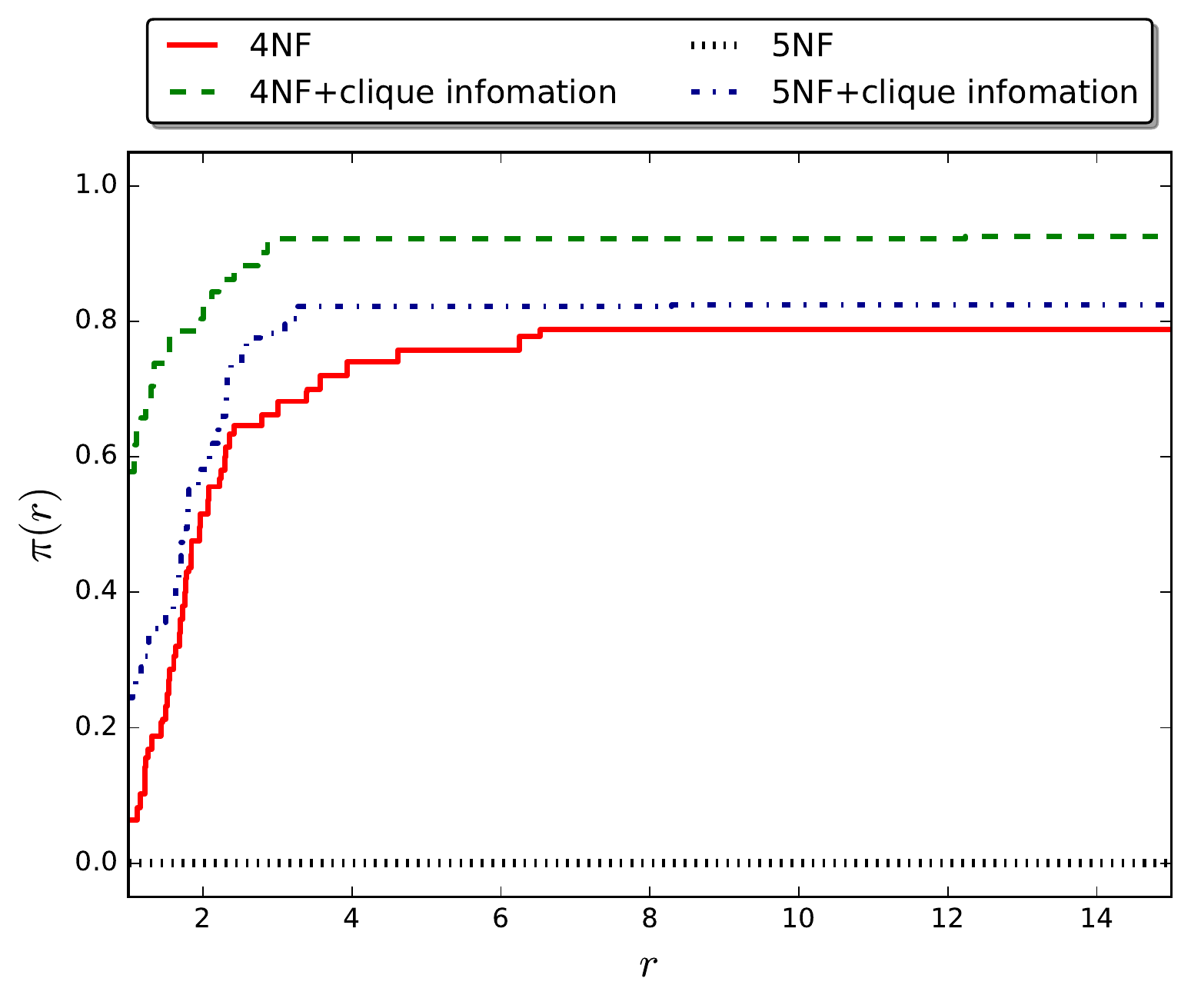}
	\caption{Performance profile functions for the results in the $3$-SAT experiment.}
	\label{fig:3SAT_PProfile}
\end{figure}

In our next numerical experiment, for solving Sudoku puzzles, we compared the performance  of DR applied to the Elser's binary feasibility problem formulation~\cite{Elser} (see also~\cite[Section~6.2]{ABTcomb}), with the reformulations as a graph coloring ($C_1\cap C_2\cap C_3\cap C_4^\star$) and as a graph precoloring ($\overline{C}_1\cap C_2\cap C_3$) explained in Section~\ref{sec:precoloring}. We considered the 95~hard puzzles from the library \texttt{top95}\footnote{\texttt{top95}: \url{http://magictour.free.fr/top95}}, which was the one among the libraries tested in~\cite[Table~2]{ABTcomb} where DR was most unsuccessful. For each formulation and each puzzle, Douglas--Rachford was run for 20 random starting points. Results and performance profiles are displayed in Figure~\ref{fig:Sudoku}. As it was expected, the binary formulation was much faster, since this formulation is specifically designed for solving these puzzles. On average, the binary formulation solved a Sudoku in 5.76 seconds, while the graph coloring formulation needed 33.78 seconds. The worst method was the reformulation as a graph coloring problem, which needed 112.25 seconds on average to solve a Sudoku. Even so, it was surprising to see that the rate of success for these three formulations was very similar, around 90\%.

\begin{figure}[ht!]
	\centering
	\includegraphics[width=0.66\linewidth]{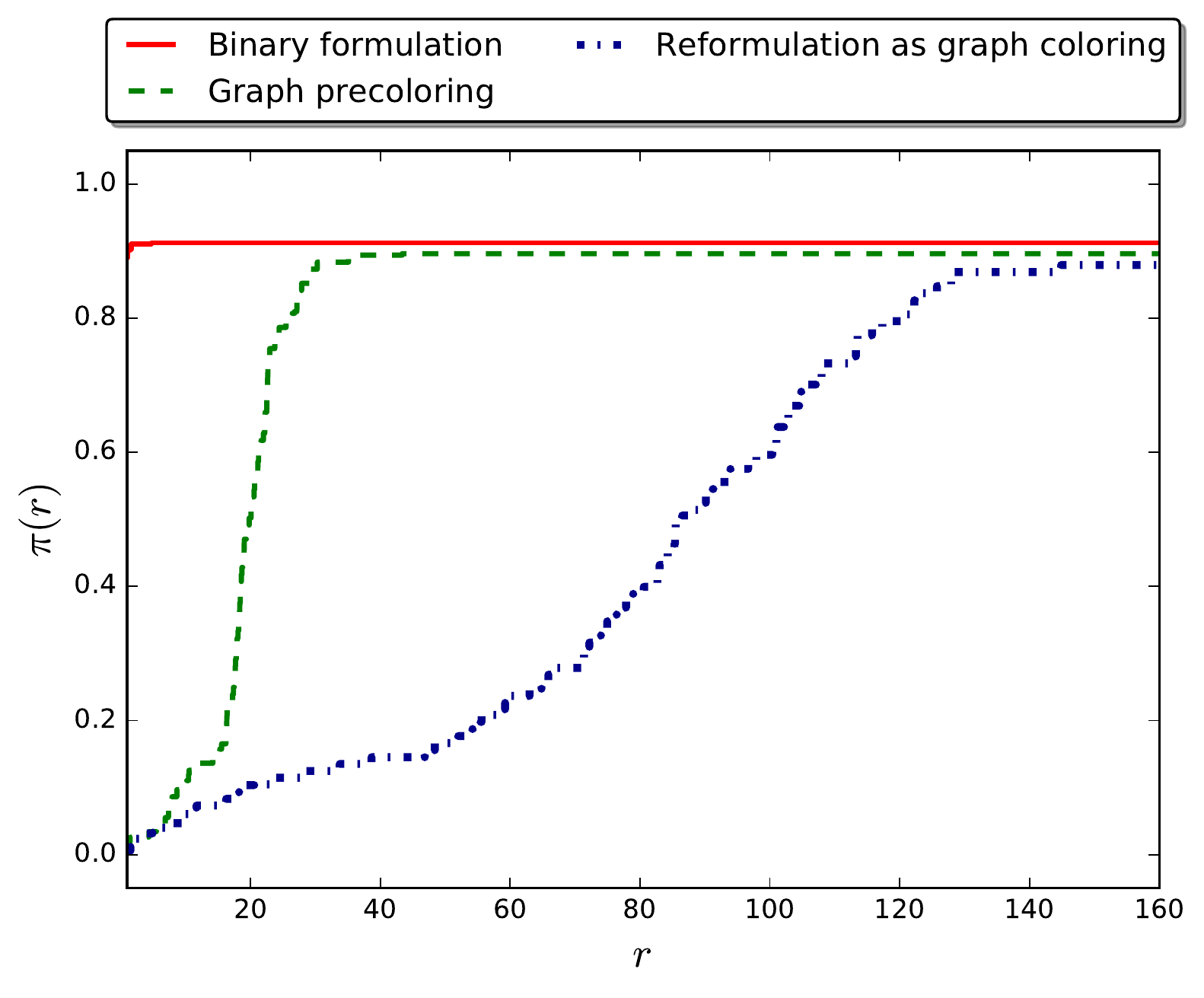}\vspace{5pt}
	\centering
	\begin{tabular}{|c||c|c||c|c||c|c|}
		\cline{2-7}
		\multicolumn{1}{c|}{} & \multicolumn{2}{c||}{Binary} & \multicolumn{2}{c||}{Graph} & \multicolumn{2}{c|}{Reformulation as} \tabularnewline
		\multicolumn{1}{c|}{} & \multicolumn{2}{c||}{formulation} & \multicolumn{2}{c||}{precoloring} & \multicolumn{2}{c|}{graph coloring} \tabularnewline
		\hhline{-||--||--||--}
		Time & Inst. & Cumul. & Inst. & Cumul. & Inst. & Cumul.  \tabularnewline
		\hhline{=::======}
0-49 & 1,688 & 88.8\% & 1,451 & 76.4\% &  261 & 13.7\% \tabularnewline
50-99 & 19 & 89.8\% & 173 & 85.5\% & 534 & 41.8\% \tabularnewline
100-149 & 15 & 90.6\% & 40 & 87.6\% & 451 & 65.6\% \tabularnewline
150-199 & 6 & 90.9\% & 22 & 88.7\% & 267 & 79.6\% \tabularnewline
200-249 & 4 & 91.2\% & 12 & 89.4\% & 118 & 85.8\% \tabularnewline
250-299 & 2 & 91.3\% &5 & 89.6\% & 45 & 88.2\% \tabularnewline
\hhline{-||--||--||--}
Unsolved & 166 & 100\% & 197 & 100\% & 224 & 100\% \tabularnewline
\hhline{-||------}
	\end{tabular}
	\caption{Time spent (in seconds) to find the solution of 95 different Sudoku problems by DR with the graph precoloring, the binary, and the graph coloring formulations. For each problem, 20~starting points were randomly chosen. We stopped the algorithm after a maximum time of 5~minutes, in which case the problem was labeled as ``Unsolved''. The results are shown in a table and a performance profile.}\label{fig:Sudoku}
\end{figure}

In Table~\ref{tbl:Sudoku_fails} we list the Sudokus for which either the binary or the graph precoloring formulation failed to find a solution for some starting point. It is apparent that the three methods tend to fail on the same Sudokus. The reformulation as graph coloring was clearly the most successful method for Sudoku~19. The graph precoloring formulation had a very bad performance on Sudoku~22, compared to the other two methods. On the other hand, it is remarkable that the binary formulation was significantly less successful than the graph precoloring for Sudoku~90, and that it failed to find any solution at all for Sudoku~25. Both the graph precoloring formulation and the reformulation as graph coloring also had troubles with this Sudoku, and were only able to find a solution for 3  and 2 out of the 20 starting points, respectively. This Sudoku is the one shown in Figure~\ref{fig:Sudoku_colored}.

\begin{table}[ht!]
\centering
\begin{tabular}{|c|c|c|c|c|c|c|c|c|c|}
\hline
Sudoku Number & 5 & 12 & 13 & 17 & 19 & 22 & 25 & 29 & 38 \\
\hline\hline
Binary formulation & {0} & {0} & {16} & 19 & 5 & {1} & 20 & {1} & 17 \\
Graph precoloring & 6 & 1 & 18 & {18} & 13 & 19 & {17} & 7 & 15\\
Reformulation as graph coloring & 13 & 1 & {16} & {18} & {1} & 9 & 18 & 15 & {12}\\
\hline
\end{tabular}\vspace{8pt}
\begin{tabular}{|c|c|c|c|c|c|c|c|c|c|}
\hline
Sudoku Number & 53 & 59 & 66 & 82 & 83 & 85 & 86 & 90 & 94 \\
\hline\hline
Binary formulation& {0} & {0} & 14 & {0} & 5 & 18 & 17 & 14 & 19 \\
Graph precoloring  & 5 & 3 & 13 & 3 & 5 & 15 & {15} &{ 8} & 16 \\
Reformulation as graph coloring & 6 & 1 & {11} & 7 & {4} & {14 }& 16 & 14 & {15}\\
\hline
\end{tabular}
\caption{Number of failed runs in either the binary or the graph precoloring formulation. Sudokus not listed here where successfully solved by these two formulations for every starting point (not all the Sudokus where the graph coloring reformulation failed are listed).}
\label{tbl:Sudoku_fails}
\end{table}

Finally, in our last experiment, we explored the behavior of DR for solving the knight's tour problem when the size of the chessboard is increased. Results are displayed in Figure~\ref{fig:Exp_KnTour}, where we analyze both paths and cycles with the two formulations $\widetilde C_1\cap\widetilde C_{3,\text{odd}}\cap\widetilde C_{3,\text{even}}$ (red crosses) and $\widetilde C_1\cap \widetilde C_2\cap\widetilde C_{3,\text{odd}}\cap\widetilde C_{3,\text{even}}$ (blue dots). Clearly, the formulation including the redundant constraint $\widetilde C_2=\mathbb{R}^{n\times n}$ is much faster. For this reason, no knight's paths of size $10$ or $11$ are shown for the formulation without $\widetilde C_2$, as the algorithm was stopped before it had enough time to converge. The rate of success of both formulations for paths and cycles was very similar, around 95\%.
It can be observed an exponential dependence between time and size, which makes DR to be inappropriate for big puzzles. It is remarkable that the line~$t(n)$ obtained by linear regression predicts an average time of $t(12)=$ 1,439~seconds for finding a knight's cycle in a $12\times 12$ chessboard, and this totally fits with the average time of 1,397~seconds obtained in the experiment shown in Figure~\ref{fig:Knights_Tour}.

\begin{figure}[ht!]
	\centering
	\subfigure[Knight's paths.]{\includegraphics[width=0.49\linewidth]{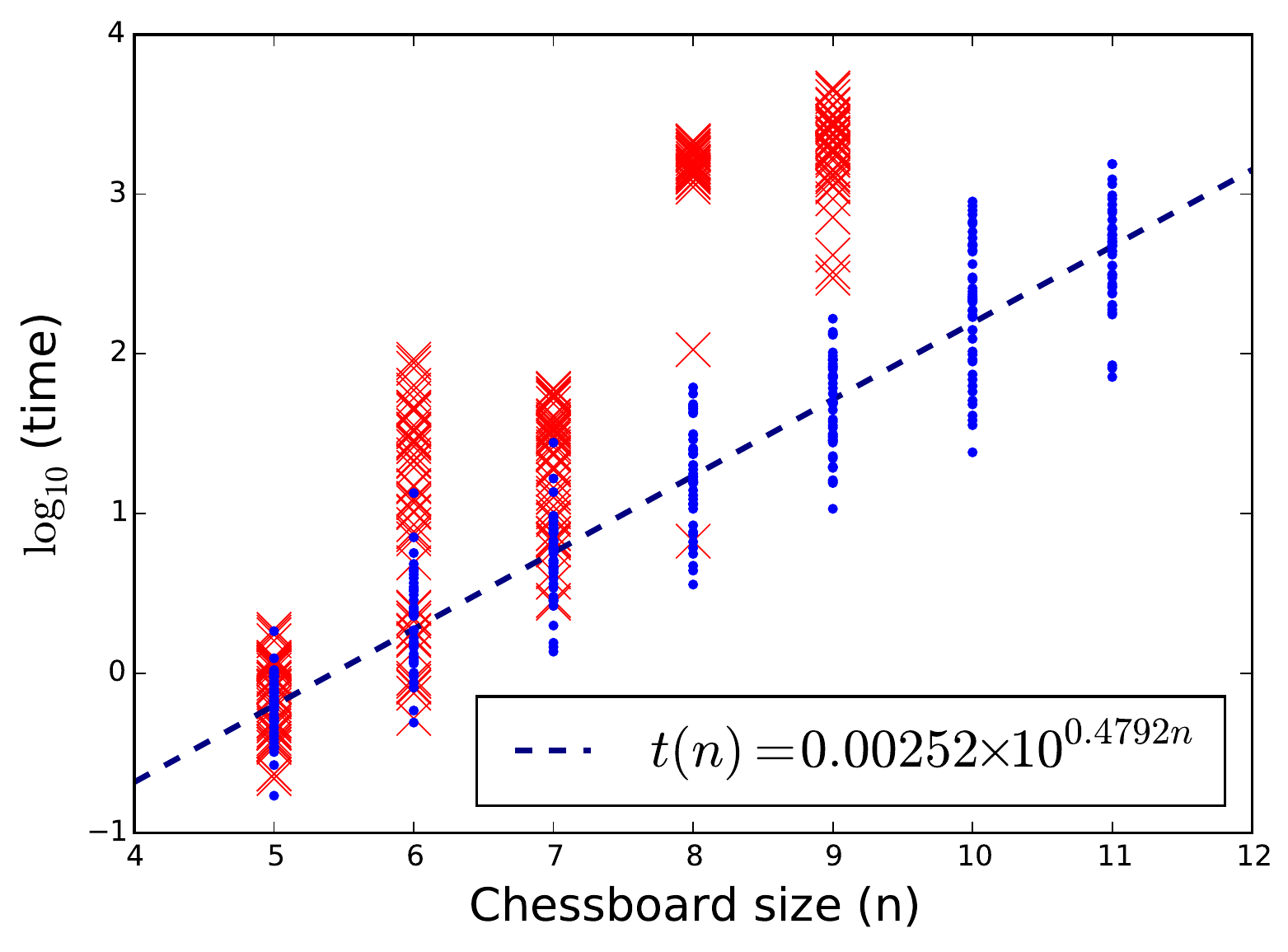}}\hfill
	\subfigure[Knight's cycles.]{\includegraphics[width=0.49\linewidth]{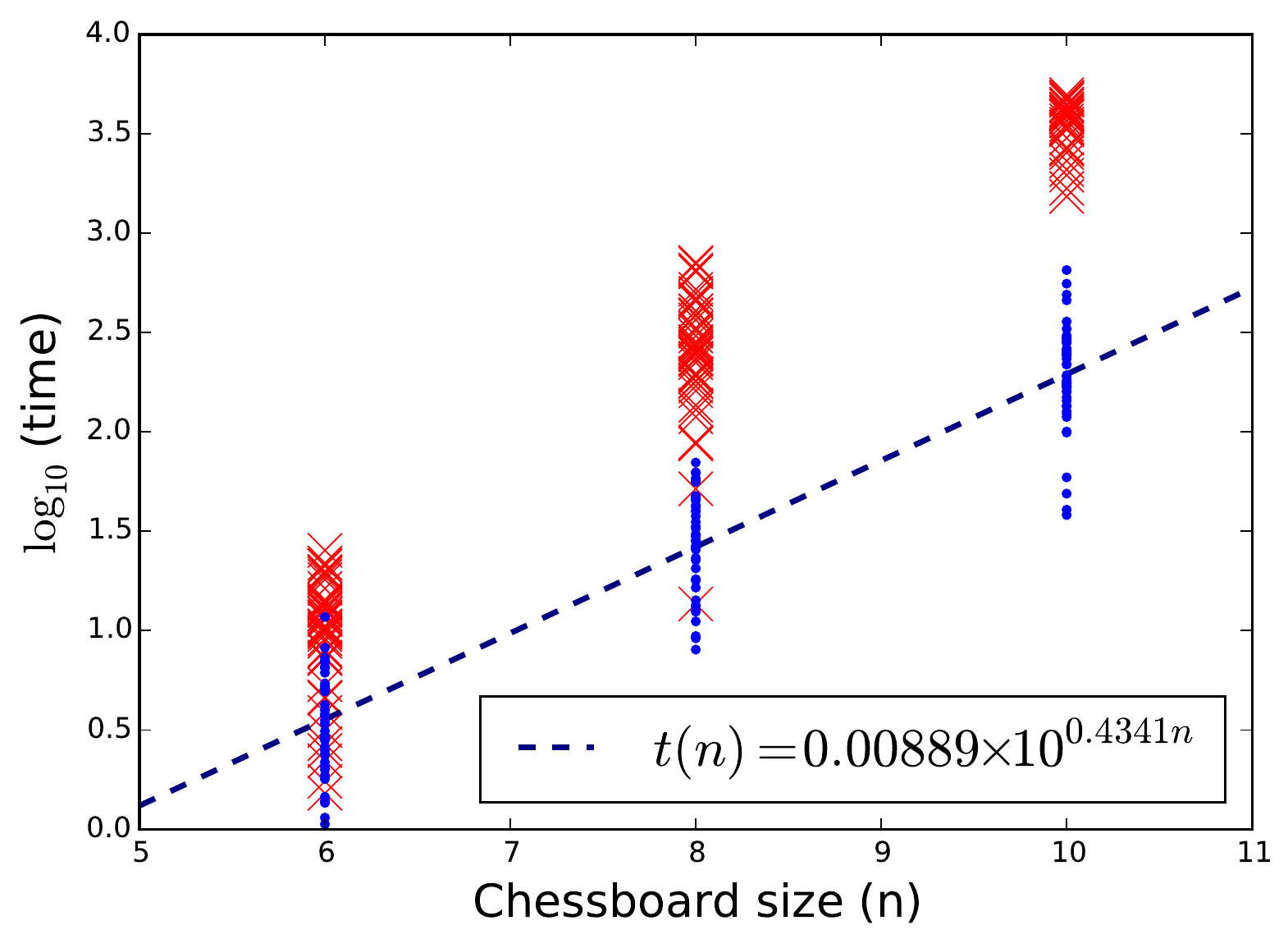}}
	\caption{Time (in $\log_{10}$) required by DR for finding open and closed knight's tours on chessboards of different size. For each size, 50 random starting points were chosen. Blue dots represent instances of the DR method applied with the addition of the redundant constraint $\widetilde C_2=\mathbb{R}^{n\times n}$, while red crosses represent instances where the method was run without $\widetilde C_2$. The dotted lines were obtained by linear regression. The algorithm was stopped after a maximum time of 5,000 seconds, in which case the instance is not displayed.}
	\label{fig:Exp_KnTour}
\end{figure}

\section{Conclusion}\label{sec:conclusion}
We showed that the Douglas--Rachford method can be used as a successful heuristic for solving graph coloring problems. A wide collection of examples and variants of these problems were considered along the paper: precoloring and list coloring problems (including Sudoku puzzles), 3-SAT problems, 8-queens puzzles and generalizations, and Hamiltonian path problems (as the knight's tour problem).

A key aspect for the success of the method was to formulate the problems as suitable combinatorial feasibility problems. In this framework, the Douglas--Rachford method had already been proved to be an effective heuristic~\cite{ABTmatrix,ABTcomb,Elser}, despite the shortfall of theoretical results that justify its good performance.

We tested the performance of Douglas--Rachford for solving a representative sample of graph coloring problems. It is important to point out that the Douglas--Rachford algorithm is conceptually simple and easy to implement. For simple graphs, the method was able to find a solution for almost every random starting point. For more complex problems, we showed the importance of adding maximal clique information for the success of the method. It is worth mentioning the results in the 3-SAT experiment, where we observed that the use of maximal clique information was decisive.

As expected, in problems where it was possible to successfully apply Douglas--Rachford to the original problem, the method became slower when it was applied to the reduction of the problem to a graph coloring problem. This is the case for Sudoku puzzles, which were solved in our experiments much faster when the method was applied to the formulation of the problem as a binary feasibility problem (on average, 6 and 20 times faster than the graph precoloring formulation and the reformulation as graph coloring, respectively). Nevertheless, it was interesting to observe that the rate of success in finding the solution was high and very similar for the three formulations.

For the knight's tour problem, we showed a clear exponential dependence of the time needed to find a solution with respect to the size of the chessboard. After all, this is not that surprising, due to the NP-completeness of the problem. This shows that the Douglas--Rachford method is probably inadequate for tackling big complex graphs.

In the convex setting, for infeasible problems, the sequence generated by Douglas--Rachford provably tends to infinity (in norm). In our experiments, we obtained some similar results for some particular graphs (see Figure~\ref{fig:infeasible}), a behavior that seems to be strongly influenced by the formulation of the feasibility problem.

All the above motivates us to further study in future research why the Douglas--Rachford algorithm can successfully solve this type of nonconvex problems, as well as analyze the detection of infeasibility in nonconvex settings with this algorithm.

\paragraph{Dedication and acknowledgements}

This paper is dedicated to the memory of Jon Borwein, for his enthusiastic comments and suggestions on a very preliminary version of the manuscript. Jon was planning to collaborate with us on this paper after getting back from Canada. Unfortunately, he stayed there forever. We greatly missed his valuable input in the elaboration of this work, and we will surely miss him in the future.\newline

F.J. Arag\'on and R. Campoy were partially supported by MINECO of Spain  and  ERDF of EU, grant MTM2014-59179-C2-1-P. F.J. Arag\'on was supported by the Ram\'on y Cajal program by MINECO of Spain  and  ERDF of EU (RYC-2013-13327) and R. Campoy was supported by MINECO of Spain and ESF of EU (BES-2015-073360) under the program ``Ayudas para contratos predoctorales para la formaci\'on de doctores 2015''.

\begin{figure}[h!]
	\centering
	\subfigure[Knights on a classic chessboard.]{\includegraphics[width=0.46\linewidth]{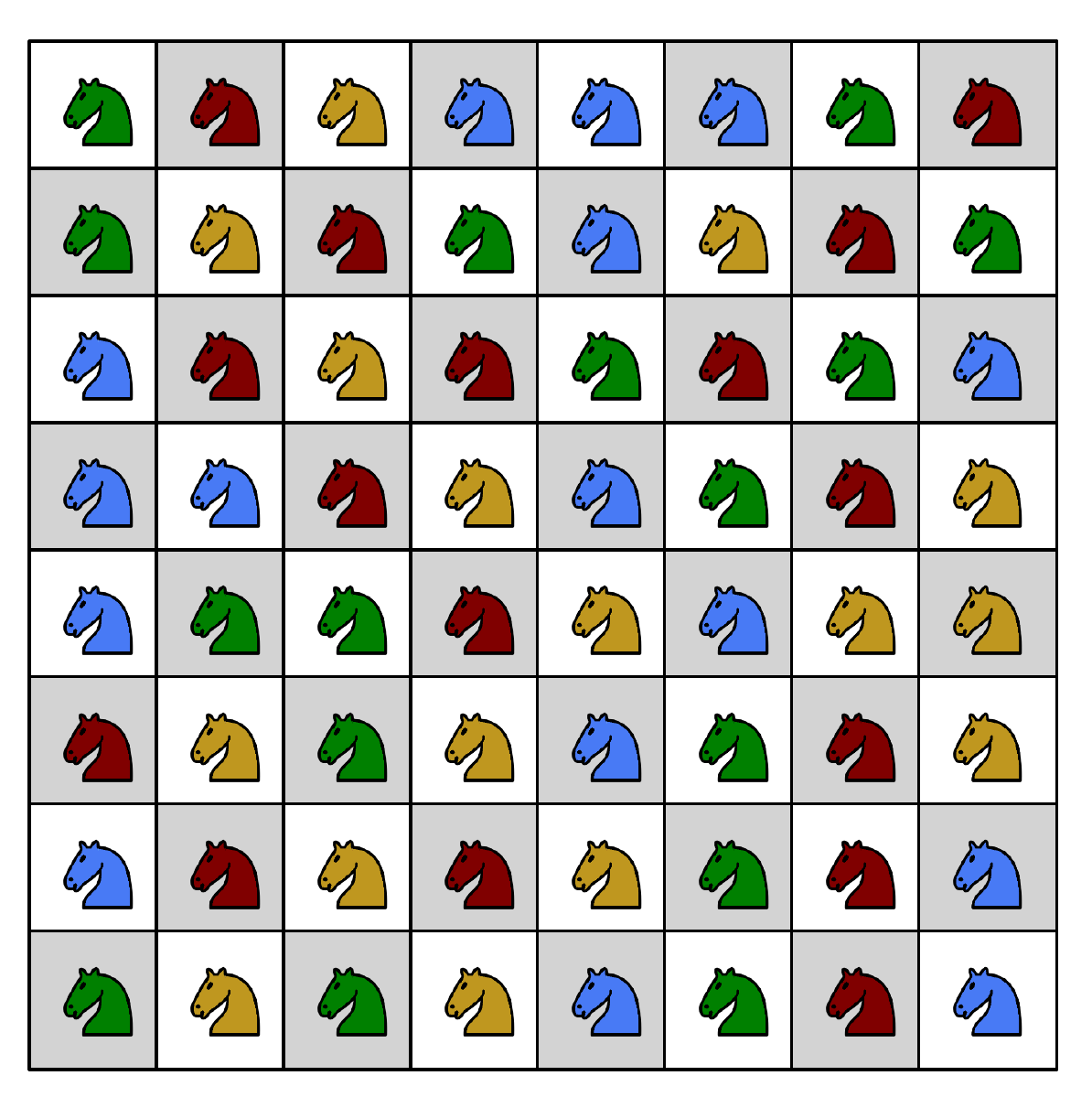}}\quad	
	\subfigure[10-queens puzzle in a chessboard with a hole.]{\includegraphics[width=0.46\linewidth]{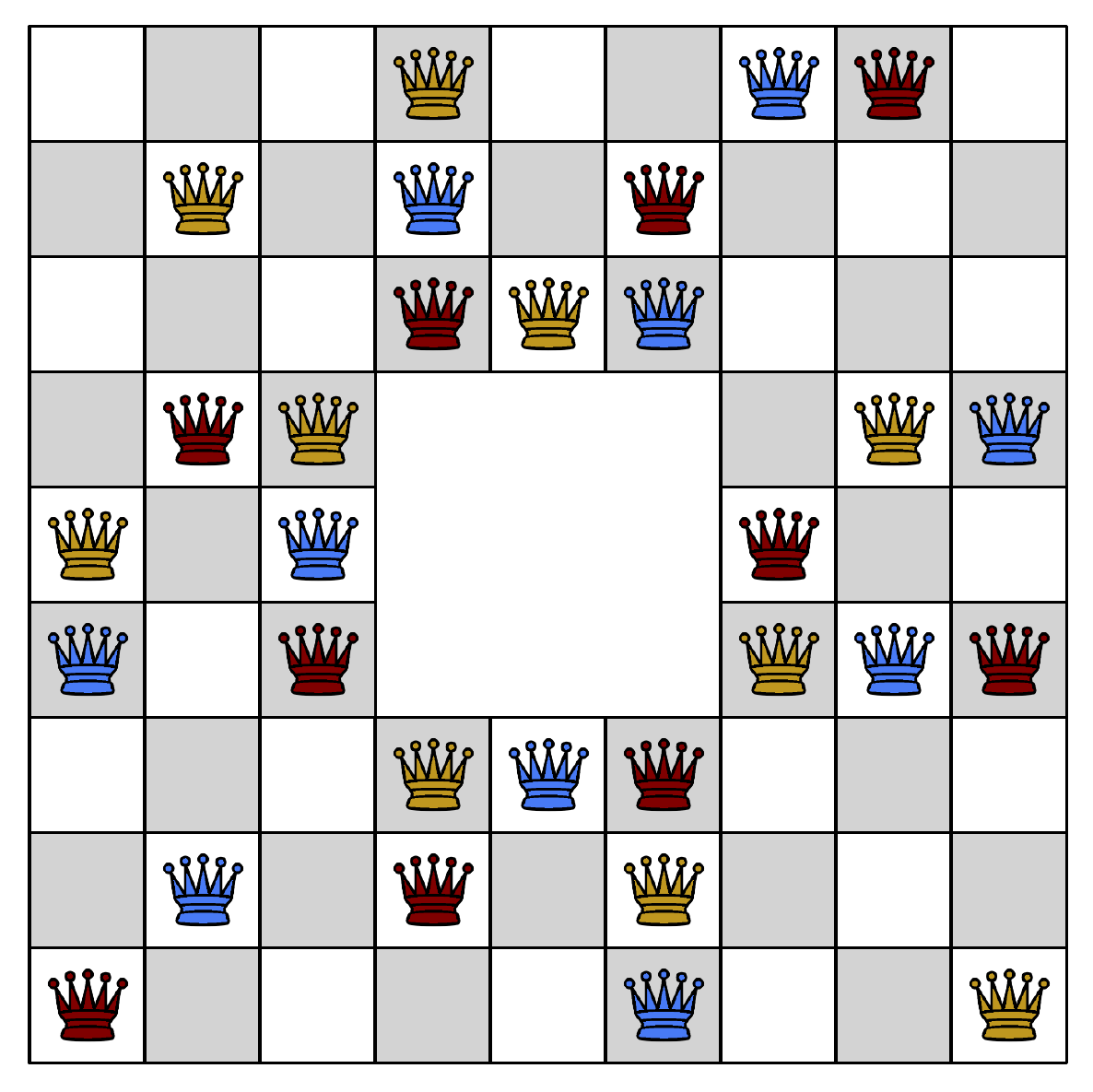}}
	\subfigure[`$\pi$-zzle'.]{\includegraphics[width=0.6\linewidth]{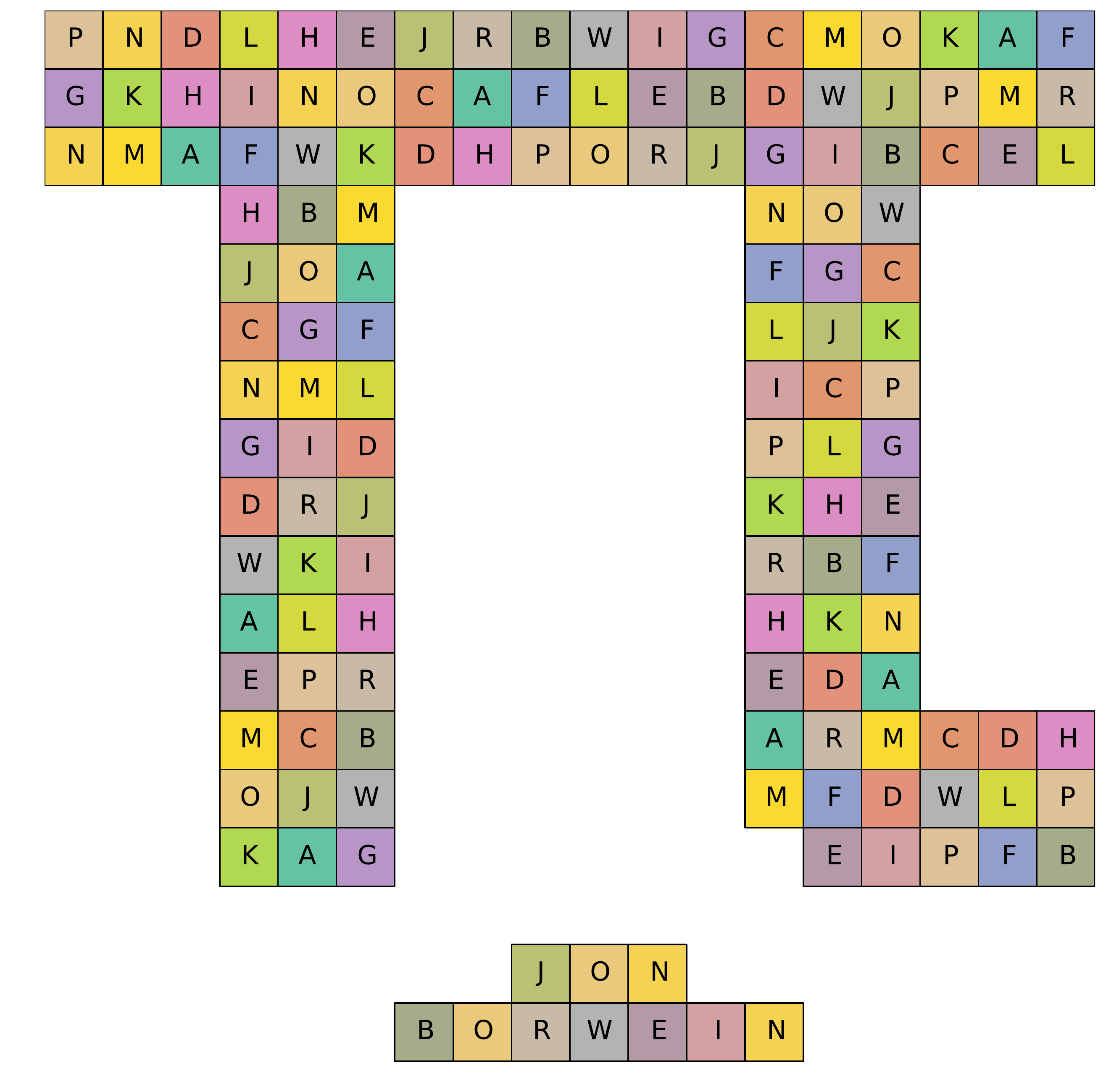}}
	\caption{Solution to the puzzles in Figure~\ref{fig:puzzles} computed with DR.  For 10~random starting points, the average (maximum) time spent for puzzles (a), (b) and (c) was 0.23, 3.32 and 252.82 seconds (0.35, 11.49 and 424.67 seconds), respectively.}\label{fig:puzzles_solved}
\end{figure}

\end{document}